\documentclass[a4paper,11pt]{amsart}
\usepackage[all]{xy}
\usepackage{amscd}
\usepackage{amsfonts}
\usepackage{a4wide}
\usepackage{amsmath, amsthm}
\usepackage{amssymb}
\usepackage{mathtools}
\usepackage{enumerate}
\usepackage{fullpage}
\usepackage[applemac]{inputenc}
\usepackage[pdftex,breaklinks,colorlinks,citecolor=blue]{hyperref}

\newtheorem{theorem}{Theorem}[section]

\newtheorem{proposition}[theorem]{Proposition}
\newtheorem{lemma}[theorem]{Lemma}

\providecommand{\keywords}[1]

\title{Iterated constructions of completely normal polynomials}
\author{Anibal Aravena $^\dag$}
\address{ Anibal Aravena. Pontificia Universidad Cat\'olica de Chile,  Facultad de Matem\'aticas, Vicu\~na Mackenna 4860, Santiago, Chile.
}
\email{akaravena@uc.cl}
\thanks{\dag Pontificia Universidad Cat\'olica de Chile}
\thanks{The author was partially supported by FONDECYT 1171329
}

\begin{document}
 \maketitle
\begin{abstract}
    The $R_{\sigma,t}$-transform introduced by Bassa and Menares can be used to construct families of irreducible polynomials in $\mathbb{F}_q[x]$. This iterative construction is a generalization of Cohen's $R$-transform. For this transform,  Chapman proved that under some conditions, the polynomials in the resulting family   are completely normal. In this paper we establish conditions ensuring that the polynomials obtained by using the $R_{\sigma,t}$-transform are completely normal polynomials and we give a simple proof of Chapman's result.
 \end{abstract}\hspace{10pt}
 
\noindent \textbf{Keywords:} Finite Fields; normal polynomials; $R$-transform. 
\section{Introduction}
Let $q$ be a  prime power number and $n$ a positive integer. An element $\alpha \in \mathbb{F}_{q^n}$ is said normal in the extension $\mathbb{F}_{q^n}/\mathbb{F}_q$ if the elements $\{\alpha, \alpha^q,\ldots , \alpha^{q^{n-1}}\}$ are linearly independent over $\mathbb{F}_q$. Furthermore, if for each divisor $d$  of $n$, $\alpha$ is normal in the extension $\mathbb{F}_{q^n}/\mathbb{F}_{q^d}$, then $\alpha$ is called a completely normal element in the extension $\mathbb{F}_{q^n}/\mathbb{F}_{q}$.  Let $f$ be an irreducible polynomial in $\mathbb{F}_q[x]$, we say that $f$ is normal (completely normal) over $\mathbb{F}_q[x]$ if any of its roots is normal (completely normal) in the extension $\left.\mathbb{F}_{q^{\deg f}}\right/\mathbb{F}_q$.

Starting with a polynomial $g_0(x)\in \mathbb{F}_q[x]$, we focus on an iterative construction introduced in \cite{bassa2019rtransform} to get a family $\{g_k\}_{k\geq 0}$ of polynomials in $\mathbb{F}_q[x]$ of growing degree. Our aim is to find suitable hypothesis ensuring that $\{g_k\}_{k\geq 0}$ are all completely normal polynomials over $\mathbb{F}_q[x]$. To define it and state the basic results we need some previous notations. Let 
\begin{equation*}
    \text{GL}_2(\mathbb{F}_q)=\left\{\sigma =\begin{pmatrix}
a & b\\ 
c & d
\end{pmatrix} \in M_2(\mathbb{F}_q): ad-bc\neq 0\right\}.
\end{equation*}
For $   \alpha \in \overline{\mathbb{F}}_q\cup \{\infty\}, \sigma=\begin{pmatrix}
a & b\\ 
c & d
\end{pmatrix} \in {\rm GL}_2(\mathbb F_q)$, we  define the operation  $\sigma \cdot \alpha$   by
\begin{equation*}
 \sigma\cdot\alpha =\begin{cases}
\frac{a\alpha+b}{c\alpha+d} & \text{ if }  c\alpha + d \neq 0 \\ 
\infty & \text{ if } c\alpha + d =0
\end{cases}, \quad \sigma \cdot \infty=\begin{cases}
\frac{a}{c} & \text{ if } c\neq 0 \\ 
\infty & \text{ if } c=0 
\end{cases}.
\end{equation*}
Also, for a degree $n$ polynomial $f$ in $\mathbb{F}_q[x]$, we define the polynomial $P_\sigma(f)$ by
\begin{equation*}
P_{\sigma}(f)(x)=(cx+d)^nf\left(\frac{ax+b}{cx+d}\right).
\end{equation*}

Also, for any positive integer $t$, let $S_t:\mathbb{F}_q[x]\to \mathbb{F}_q[x]$ be the map given by $S_t(f)(x)=f(x^t)$.\\
Finally, for  a dregee $n$ polynomial $g$, we define the ${R_{\sigma,t}}$-transform by 
\begin{equation*}
    g^{R_{\sigma,t}}(x)=P_{\sigma^{-1}}\circ S_t\circ P_{\sigma}(g)(x).
\end{equation*}
If $g(a/c)\neq 0$, we define the element $\eta(g;\sigma) \in \mathbb{F}_q$ by 
\begin{equation*}
\eta(g;\sigma)=(\sigma^{-1}\cdot \infty)^n\dfrac{g(\sigma \cdot 0)}{g(\sigma\cdot \infty)}=\left(-\dfrac{d}{c}\right)^n\cdot g\left( \dfrac{b}{d}\right)\cdot g\left(\dfrac{a}{c}\right)^{-1}
\end{equation*}
with  the convention $\eta(g;\sigma)=(-d/a)^n\cdot g(b/d)$ if $c=0$ and $ \eta(g;\sigma)=(-b/c)^n\cdot g(a/c)^{-1}$ if $d=0$. 

The following Theorem is about the irreducibility of the iterated sequence $\{g_k\}_{k\geq0}$ constructed using the $R_{\sigma,t}$-transform.
\begin{theorem}[\cite{bassa2019rtransform},Theorem 1.2] \label{Rsigma}
Consider a finite field $\mathbb F_q$. Let $t\geq 2$ be an integer such that every prime factor of $t$ divides $q-1$. Let  $\sigma=\begin{pmatrix}
a & b\\ 
c & d
\end{pmatrix} \in {\rm GL}_2(\mathbb F_q)$.  Let $g(x)\neq x-a/c$ be a monic irreducible polynomial in $\mathbb F_q[x]$ of degree $n$. If $q\equiv 3 \mod 4$ and $t$ is even assume moreover that $n$ is even.

Assume that,  for all prime numbers $\ell|t$, the element $\eta(g;\sigma)$ is not an $\ell$-th power in $\mathbb F_q$. Define $g_0=g$ and let $g_k=g_{k-1}^{R_{\sigma,t}}$ for $k\geq 1$. Then $\{g_k\}_{k\geq 0}$ forms an infinite sequence of irreducible polynomials, with $\deg g_k=t^k\cdot n$.
\end{theorem}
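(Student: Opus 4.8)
The plan is to prove by induction on $k$ that each $g_k$ is irreducible of degree $t^{k}n$, reducing the inductive step to the irreducibility of a single binomial over a finite field. The starting point is to read off the action of the $R_{\sigma,t}$-transform on roots. Since $\sigma\in{\rm GL}_2(\mathbb F_q)$ commutes with the Frobenius, $P_\sigma$ sends a polynomial with root set $\{\theta_i\}$ to one with root set $\{\sigma^{-1}\cdot\theta_i\}$, while $S_t$ replaces each root by its $t$-th roots. Hence, writing $\theta$ for a root of $g_{k-1}$ and setting $\beta=\sigma^{-1}\cdot\theta$, the roots of $g_k=g_{k-1}^{R_{\sigma,t}}$ are exactly the elements $\sigma\cdot\gamma$ with $\gamma^{t}=\beta$. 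Because $\sigma$ has entries in $\mathbb F_q$ we get $\mathbb F_q(\sigma\cdot\gamma)=\mathbb F_q(\gamma)$ and $\mathbb F_q(\beta)=\mathbb F_q(\theta)=\mathbb F_{q^{m}}$ with $m=\deg g_{k-1}$; therefore $g_k$ is irreducible of degree $tm$ precisely when $x^{t}-\beta$ is irreducible over $\mathbb F_{q^{m}}$. One first checks that $g_{k-1}(a/c)\neq 0$, which holds because $g_{k-1}$ is irreducible of degree $>1$ (or of degree $1$ with root $\neq a/c$ when $n=1$), so that all three maps preserve degrees as expected.

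Next I would invoke the classical binomial criterion (Vahlen--Capelli / Lidl--Niederreiter): $x^{t}-\beta$ is irreducible over a field $K$ if and only if $\beta$ is not an $\ell$-th power in $K$ for every prime $\ell\mid t$, and, when $4\mid t$, moreover $\beta\notin -4K^{4}$. The bridge to the hypotheses is the identity
\begin{equation*}
\eta(g;\sigma)=N_{\mathbb F_{q^{m}}/\mathbb F_q}\!\left(\sigma^{-1}\cdot\theta\right),
\end{equation*}
obtained by expanding the norm $\prod_i \sigma^{-1}\cdot\theta_i$ over the conjugates $\theta_i$ of $\theta$ and recognising the resulting products as $g$ evaluated at $b/d$ and $a/c$. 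Since every prime $\ell\mid t$ divides $q-1$, one has $N(\beta)^{(q-1)/\ell}=\beta^{(q^{m}-1)/\ell}$, so $\beta$ is an $\ell$-th power in $\mathbb F_{q^{m}}$ if and only if $\eta(g;\sigma)$ is an $\ell$-th power in $\mathbb F_q$. This converts the non-$\ell$-th-power condition on $\beta$ into the stated condition on $\eta$.

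The heart of the argument is to propagate this along the iteration. Using transitivity of the norm together with the fact that the minimal polynomial of $\gamma$ over $\mathbb F_{q^{m}}$ is $x^{t}-\beta$, I would derive the recursion
\begin{equation*}
\eta(g_k;\sigma)=(-1)^{(t+1)t^{k-1}n}\,\eta(g_{k-1};\sigma),
\end{equation*}
and then show by induction that $\eta(g_k;\sigma)$ is never an $\ell$-th power. For odd $\ell$ the sign is itself an $\ell$-th power and is harmless. The delicate case is $\ell=2$: the exponent $(t+1)t^{k-1}n$ is even unless $k=1$ and $n$ is odd, in which case $\eta(g_1;\sigma)=-\eta(g_0;\sigma)$, and multiplication by $-1$ flips the quadratic character exactly when $q\equiv 3\bmod 4$. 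This is precisely the configuration excluded by the assumption ``if $q\equiv 3\bmod 4$ and $t$ is even then $n$ is even'', so the induction survives. I expect this sign bookkeeping to be the main obstacle, as it is what forces the parity side condition.

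Finally I would dispatch the quartic condition when $4\mid t$. Here $2\mid t$ forces $q$ odd, and the hypotheses guarantee $-1$ is a square in $\mathbb F_{q^{m}}$: automatically if $q\equiv1\bmod4$, and because $n$, hence $m=t^{k-1}n$, is even when $q\equiv3\bmod4$. Writing $-4u^{4}=-(2u^{2})^{2}$, every element of $-4\mathbb F_{q^{m}}^{4}$ is then a square in $\mathbb F_{q^{m}}$, contradicting the $\ell=2$ non-power condition already established; hence $\beta\notin-4\mathbb F_{q^{m}}^{4}$ comes for free. Assembling the three ingredients---the root description, the norm/character translation, and the sign-controlled induction---yields that every $g_k$ is irreducible of degree $t^{k}n$, completing the proof.
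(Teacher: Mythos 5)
The paper never proves Theorem \ref{Rsigma}: it is imported verbatim from \cite{bassa2019rtransform} and used as a black box (even Lemma \ref{fk}, the only related internal statement, defers to that reference for its proof), so there is no internal proof to compare against. Judged on its own merits, your reconstruction is correct, and it is essentially the natural (and, as far as I can tell, the cited source's) argument. All three pillars check out. First, the root description of $g_k$ and the reduction of its irreducibility to that of the binomial $x^t-\beta$ over $\mathbb{F}_{q^m}$, $m=\deg g_{k-1}$, is right, though your ``precisely when'' claims more than you need or verify: the direction your induction actually uses is that if $x^t-\beta$ is irreducible then $S_t(P_\sigma(g_{k-1}))$ is irreducible of degree $tm\geq 2$, hence has no root at $\sigma^{-1}\cdot\infty$, and Proposition \ref{theopsigma} then gives that $g_k$ is irreducible of degree $tm$; this also settles the degree-preservation point you gloss over. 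Second, the identity $\eta(g;\sigma)=N_{\mathbb{F}_{q^m}/\mathbb{F}_q}(\sigma^{-1}\cdot\theta)$ is correct: expanding $\prod_i\frac{d\theta_i-b}{a-c\theta_i}$ over the conjugates gives exactly $(-d/c)^m\,g(b/d)\,g(a/c)^{-1}$, and since this ratio is invariant under scaling of $g$, the identity persists for the iterates $g_k$, which are generally not monic (a point worth making explicit). The transfer of the $\ell$-th power condition across the norm is valid precisely because every prime $\ell\mid t$ divides $q-1$, so that $N(\beta)^{(q-1)/\ell}=\beta^{(q^m-1)/\ell}$. Third, the recursion $\eta(g_k;\sigma)=(-1)^{(t+1)t^{k-1}n}\,\eta(g_{k-1};\sigma)$ follows from $N_{\mathbb{F}_{q^{tm}}/\mathbb{F}_{q^m}}(\gamma)=(-1)^{t+1}\beta$ (constant term of $x^t-\beta$) together with $N_{\mathbb{F}_{q^m}/\mathbb{F}_q}(u)=u^m$ for $u\in\mathbb{F}_q$, and your sign bookkeeping correctly isolates the unique dangerous configuration ($t$ even, $k=1$, $n$ odd, $q\equiv 3\bmod 4$) as exactly the one excluded by the parity hypothesis; likewise the disposal of the $\beta\notin -4K^4$ condition when $4\mid t$, using that $-1$ is a square in $\mathbb{F}_{q^m}$ under the stated hypotheses, is sound.
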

An important particular case is the $R$-transform, previously introduced by S.Cohen \cite{Cohen1992TheEC}:
\begin{equation*}
    g^R(x)=(2x)^{\deg g}\cdot g\left(\frac{1}{2}\left(x+\frac{1}{x}\right)\right).
\end{equation*}
This corresponds to use  $t=2$, $\sigma^*=\begin{pmatrix}
1 & 1\\ 
1 & -1
\end{pmatrix}$ and  we have that $\eta(\sigma
^*,g)=g(1)g(-1)^{-1}$. Moreover, we have the following result.
\begin{theorem}[\cite{CHAPMAN19971},Theorem 1]\label{chaptheo}
Let $q\equiv 1 \mod 4$ be a prime power, and let $g_1=x^2+ax+1$ be an irreducible quadratic polynomial. Define $g_k$ recursively by $g_{k+1}=g_{k}^R$. If $\alpha_k$ is a zero of $g_k$, then $\alpha_k$ is a  completely normal element in the extension $\mathbb{F}_{q^n}/\mathbb{F}_q$.
\end{theorem}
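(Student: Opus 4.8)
The plan is to argue by induction on $k$, extracting from the $R$-transform a recursive relation between the roots at consecutive levels and then splitting the normality question through a Frobenius eigenspace decomposition. Throughout I write $\sigma$ for the Frobenius $\gamma\mapsto\gamma^q$ and I note that $\deg g_k=2^k$, so $\alpha_k$ generates $\mathbb{F}_{q^{2^k}}$ (irreducibility being guaranteed by Theorem \ref{Rsigma}).

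First I would record the arithmetic of the roots. Since $g_1=x^2+ax+1$ is irreducible with constant term $1$, its roots are $\alpha_1,\alpha_1^q$ with product $1$, so $\alpha_1^q=\alpha_1^{-1}$ and $\alpha_1^{q+1}=1$. The identity $g_k=g_{k-1}^R$ means a root $\alpha_k$ of $g_k$ satisfies $\tfrac12(\alpha_k+\alpha_k^{-1})=\alpha_{k-1}$ for a suitable root $\alpha_{k-1}$ of $g_{k-1}$; thus $\alpha_k$ is one of the two preimages of $\alpha_{k-1}$ under $w\mapsto\tfrac12(w+w^{-1})$, the other being $\alpha_k^{-1}$. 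Applying $\tau:=\sigma^{2^{k-1}}$, which fixes $\alpha_{k-1}\in\mathbb{F}_{q^{2^{k-1}}}$, shows $\tau(\alpha_k)$ is again a preimage; as $\alpha_k\notin\mathbb{F}_{q^{2^{k-1}}}$ it is not fixed by $\tau$, whence
\[
\tau(\alpha_k)=\alpha_k^{q^{2^{k-1}}}=\alpha_k^{-1},\qquad \alpha_{k-1}=\tfrac12\operatorname{Tr}_{\mathbb{F}_{q^{2^k}}/\mathbb{F}_{q^{2^{k-1}}}}(\alpha_k).
\]
Here the hypothesis $q\equiv1\bmod 4$ enters first through the base case: it gives $4\nmid q+1$, so $\alpha_1^4\neq1$, whence $\alpha_1,\alpha_1^{-1}$ are $\mathbb{F}_q$-independent and $\alpha_1$ is completely normal; it also places $i=\sqrt{-1}$ in $\mathbb{F}_q$, which is used repeatedly below.

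For the inductive step I would assume $\alpha_{k-1}$ completely normal in $\mathbb{F}_{q^{2^{k-1}}}/\mathbb{F}_q$ and decompose the top field under $\tau$. As $\tau$ is an order-$2$ involution fixing $\mathbb{F}_{q^{2^{k-1}}}$, it is $\mathbb{F}_{q^{2^{k-1}}}$-linear, so $\mathbb{F}_{q^{2^k}}=V_+\oplus V_-$ with $V_+=\mathbb{F}_{q^{2^{k-1}}}$ and $V_-=\{v:\tau v=-v\}$, and $\alpha_k=\alpha_{k-1}+\tfrac12\delta$ with $\alpha_{k-1}\in V_+$, $\delta:=\alpha_k-\alpha_k^{-1}\in V_-$. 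Since $\sigma$ commutes with $\tau$ it preserves $V_\pm$, and for $F=\mathbb{F}_{q^{2^j}}$ ($0\le j\le k-1$), writing $w_i=(\sigma^{2^j})^i\alpha_k$, $M=2^{k-1-j}$ and using $(\sigma^{2^j})^M=\tau$ one gets $w_{i+M}=(\sigma^{2^j})^i\alpha_k^{-1}$; the invertible sum-and-difference transform (determinant $-2\neq0$) then expresses the $F$-orbit of $\alpha_k$ in terms of the orbits of $\alpha_{k-1}\in V_+$ and $\delta\in V_-$. Consequently
\[
\alpha_k \text{ normal over } \mathbb{F}_{q^{2^j}} \iff \alpha_{k-1}\text{ normal over }\mathbb{F}_{q^{2^j}} \ \text{ and }\ \delta \text{ generates } V_- \text{ over } \mathbb{F}_{q^{2^j}}[\sigma^{2^j}].
\]
The first condition holds for every such $j$ by induction, so complete normality of $\alpha_k$ reduces to showing $\delta$ is a complete generator of $V_-$.

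This last point is the main obstacle. As the summand of the free rank-one module $\mathbb{F}_{q^{2^k}}\cong\mathbb{F}_q[x]/(x^{2^k}-1)$ cut out by $x^{2^{k-1}}+1$, the space $V_-$ is a cyclic $\mathbb{F}_q[\sigma]$-module isomorphic to $\mathbb{F}_q[x]/(x^{2^{k-1}}+1)=\mathbb{F}_q[x]/\Phi_{2^k}(x)$, and I must show that the $\sigma^{2^j}$-order of $\delta$ is exactly $x^{2^{k-1-j}}+1$ for all $j$. The plan is to iterate the eigenspace idea: because $q\equiv1\bmod4$, the operator $\sigma^{2^{k-2}}$ satisfies $(\sigma^{2^{k-2}})^2=\sigma^{2^{k-1}}=-1$ on $V_-$, so its eigenvalues $\pm i$ lie in $\mathbb{F}_q$ and split $V_-$ further, and generation reduces to the nonvanishing of the projections of $\delta$ onto the successive simple components. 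Each such nonvanishing should collapse, via $\tau(\alpha_k)=\alpha_k^{-1}$ and $\delta^2=4(\alpha_{k-1}^2-1)$, to the statement that $\alpha_k$ has sufficiently large order; for instance when $k=2$ a vanishing projection forces $(\alpha_2-\alpha_2^{-1})^{2(q-1)}=-1$, hence $\alpha_1^2=1$, which is impossible. Carrying this bookkeeping through all factors of $\Phi_{2^k}$ and all base fields $\mathbb{F}_{q^{2^j}}$ — exactly where the factorisation of $x^{2^{k-1}}+1$ over $\mathbb{F}_q$, governed by $v_2(q-1)$, and the hypothesis $q\equiv1\bmod4$ become indispensable — is the technical heart of the argument, and is the point that the general $R_{\sigma,t}$ framework is designed to supply.
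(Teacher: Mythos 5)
Your reduction is set up correctly, but it stops exactly where the real difficulty begins, and what is missing is not a routine detail. The base case ($q\equiv 1\bmod 4$ forces $\alpha_1^4\neq 1$, hence normality), the relation $\tau(\alpha_k)=\alpha_k^{-1}$, the splitting $\mathbb{F}_{q^{2^k}}=V_+\oplus V_-$ under the involution $\tau=\sigma^{2^{k-1}}$, and the componentwise generation criterion (valid since $x^M-1$ and $x^M+1$ are coprime in odd characteristic) are all fine. But after this reduction, complete normality of $\alpha_k$ rests entirely on the claim that $\delta=\alpha_k-\alpha_k^{-1}$ generates $V_-$ as an $\mathbb{F}_{q^{2^j}}[\sigma^{2^j}]$-module for every $0\le j\le k-1$, i.e.\ that the projection of $\delta$ onto \emph{every} irreducible factor of $x^{2^{k-1-j}}+1$ over $\mathbb{F}_{q^{2^j}}$ is nonzero. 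You verify one instance ($k=2$, $j=0$) and then explicitly defer the rest as ``the technical heart of the argument.'' That heart is the theorem: the number of irreducible factors of $x^{2^{k-1-j}}+1$ over $\mathbb{F}_{q^{2^j}}$ grows with $j$ and $k$ (it is governed by the $2$-adic valuation of $q^{2^j}-1$), each factor imposes a separate nonvanishing condition, and the order-of-$\alpha_k$ computation you sketch for $k=2$ is not shown to scale to all of them. As written, this is a program for a proof, not a proof.

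It is worth seeing how the paper closes precisely this gap, with one move your argument never makes: transporting the problem through the substitution $\theta_k=\sigma^{-1}\cdot\alpha_k$. Because Chapman's $g_1$ comes from a \emph{linear} $g_0=x-A$ with $A=-a/2$, the transformed polynomials $f_k=P_{\sigma}(g_k)$ are scalar multiples of binomials $x^{2^k}-B$ with $B=\sigma^{-1}\cdot A\in\mathbb{F}_q$. For binomials, Semaev's result (Theorem \ref{theorepre}) exhibits all the simple Frobenius-invariant subspaces as spans of monomials $\theta_k^m$, and when one writes $\alpha_k=\sigma\cdot\theta_k$ explicitly as an $\mathbb{F}_q$-linear combination of powers of $\theta_k$ (the computation in Lemma \ref{mainlemma} $iii)$), the component along every nontrivial simple subspace is \emph{automatically} nonzero: its vanishing would be a polynomial relation on $\theta_k$ of degree smaller than that of its minimal polynomial. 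So your infinitely many projection conditions collapse to a single scalar condition per subfield $\mathbb{F}_{q^s}$; for $s\neq 1$ it holds trivially because $\theta_k^{2^k/s}\notin\mathbb{F}_q$ while the would-be obstruction value lies in $\mathbb{F}_q$, and for $s=1$ it holds because the obstruction would force $\eta(g_0;\sigma)=\sigma^{-1}\cdot A$ to equal $(-d/c)^{2^k}\frac{bc}{ad}=-1\cdot(\text{square})$, a square, whereas irreducibility of $g_1$ together with $q\equiv 1\bmod 4$ makes $\eta(g_0;\sigma)$ a nonsquare (this is Theorem \ref{theonormal} $ii)$ applied to $\sigma^*$, $t=2$). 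This uniform nonvanishing mechanism is exactly the lemma your induction needs and lacks; without it, or an equivalent substitute, the argument does not close.
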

Our main Theorem is a generalization of this result for the $R_{\sigma,t}$-transform 
\begin{theorem} \label{theonormal}
Let $g(x)=x-A\in \mathbb{F}_q[x]$  with $A\neq -a/c $, $t\geq 2,\sigma\in  {\rm GL}_2(\mathbb F_q)$ be  as in Theorem \ref{Rsigma}. Let $\{g_k\}_{k\geq 0}$ the sequence constructed by iteration of  the $R_{\sigma,t}$-transform. Suppose  $dc\neq 0$, then we have the following results:
\begin{enumerate}[i)]
    \item If $ab=0$, then  $\{g_k\}_{k\geq 0}$ is a sequence of completely normal polynomials over $\mathbb{F}_q[x]$.
    \item If $ab\neq 0$, and the element $\frac{bc}{ad}$ is an $\ell$-th  power for some prime $\ell$ dividing $t$, then  $\{g_k\}_{k\geq 0}$ is a sequence of completely normal polynomials over $\mathbb{F}_q[x]$.
    \item If $ab\neq 0$, $d=-c$ and $A\neq 0$, then $\{g_k\}_{k\geq 0}$ is a sequence of completely normal polynomials over $\mathbb{F}_q[x]$
\end{enumerate}
\end{theorem}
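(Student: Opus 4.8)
The plan is to make the roots completely explicit and then reduce complete normality to a single scalar inequality. Writing $B=\eta(g;\sigma)$, one checks directly from the definition of the transform that if $\rho$ is a root of $g_k$ then $v:=\sigma^{-1}\cdot\rho$ satisfies $v^{t^k}=\sigma^{-1}\cdot A=B$ and $\rho=\sigma\cdot v$; by Theorem \ref{Rsigma} the polynomial $g_k$ is irreducible of degree $t^k$, so $\mathbb{F}_q(v)=\mathbb{F}_q(\rho)=\mathbb{F}_{q^{t^k}}$ and the minimal polynomial of $v$ over $\mathbb{F}_q$ is the binomial $x^{t^k}-B$. Since the conjugates of $v$ are exactly $\{\zeta v:\zeta^{t^k}=1\}$, the conjugates of $\rho=\tfrac{av+b}{cv+d}$ are $\tfrac{a\zeta v+b}{c\zeta v+d}$ as $\zeta$ runs over the $t^k$-th roots of unity, and I will test linear independence of these over each intermediate field. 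Note that the standing hypothesis of Theorem \ref{Rsigma} makes $B$ a non-$\ell$-th power for every $\ell\mid t$; this will be the only arithmetic input needed.

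The engine of the proof is an auxiliary element. With $e=d/c\in\mathbb{F}_q^{*}$ and $\Delta=ad-bc$, write
\[
\rho=\frac ac-\frac{\Delta/c^{2}}{v+e},
\]
so everything is governed by $w:=1/(v+e)$. Expanding $1/(\zeta v+e)$ as a geometric series and reducing with $v^{t^k}=B$ shows that, in the basis $\{(-v/e)^{r}\}_{r=0}^{t^k-1}$, the element $1/(\zeta v+e)$ has coordinate vector proportional, by a factor independent of $\zeta$, to the Vandermonde vector $(1,\zeta,\dots,\zeta^{t^k-1})$. As $\zeta$ ranges over the \emph{distinct} $t^k$-th roots of unity the associated Vandermonde determinant is nonzero, so $w$ is normal over $\mathbb{F}_q$. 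The same computation runs verbatim over each $\mathbb{F}_{q^{\delta}}$ with $\delta\mid t^k$: a lifting-the-exponent check gives $v^{t^k/\delta}\in\mathbb{F}_{q^{\delta}}$, so the minimal polynomial of $v$ over $\mathbb{F}_{q^{\delta}}$ is again a binomial and the Vandermonde argument applies. Hence $w$ is \emph{completely} normal, unconditionally.

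Next I pass from $w$ to $\rho$. Over a base field $K$ in the tower, with $[\mathbb{F}_{q^{t^k}}:K]=M$ and $v^{M}=\beta\in K$, the element $\rho=\tfrac ac\cdot 1-\tfrac{\Delta}{c^{2}}w$ is a rank-one perturbation of the normal element $w$. Because $1$ is Frobenius-invariant, its coordinate vector in the normal basis generated by $w$ is constant, equal to $1/\operatorname{Tr}_{/K}(w)$ in each slot, and the logarithmic-derivative identity gives $\operatorname{Tr}_{/K}(w)=Me^{M-1}/(e^{M}-(-1)^{M}\beta)$. The matrix-determinant lemma then shows that $\rho$ is normal over $K$ if and only if
\[
b\,d^{\,M-1}\neq(-1)^{M}a\,\beta\,c^{\,M-1}.
\]
The crucial observation is that the right-hand side lies in $\mathbb{F}_q$, while $\beta=v^{t^k/\delta}$ lies in $\mathbb{F}_q$ only when $\delta=1$; for $\delta>1$ the inequality is therefore automatic, so complete normality of $\rho$ is equivalent to the \emph{single} ground-level condition $(-1)^{t^k}a\,B\,c^{\,t^k-1}\neq b\,d^{\,t^k-1}$.

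It then remains to verify this one inequality for all $k$ in each case, which is short. In case (i) one of $a,b$ is zero while the relevant remaining quantities are nonzero (using $\det\sigma\neq0$, $cd\neq0$, $B\neq0$), so exactly one side vanishes. In case (iii), using $d=-c$ the inequality collapses to $aB+b\neq0$; since $aB+b=-Ac(a+b)/(a-Ac)$, this holds because $A\neq0$, $c\neq0$, $a+b\neq0$ (forced by $\det\sigma\neq0$ when $d=-c$) and $a-Ac\neq0$ (as $g\neq x-a/c$). In case (ii), a failure would force $B=(-1)^{t^k}\tfrac{bc}{ad}\,(d/c)^{t^k}$; here $(d/c)^{t^k}$ is an $\ell$-th power since $\ell\mid t$, the factor $\pm1$ is an $\ell$-th power, and $bc/ad$ is an $\ell$-th power by hypothesis, so $B$ would be an $\ell$-th power, contradicting the standing assumption. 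The main obstacle is the second step: proving that $w=1/(v+e)$ is completely normal through one uniform mechanism (a binomial minimal polynomial at \emph{every} level, producing Vandermonde coordinates) and then isolating the unique genuine obstruction at the ground level; once this is in place the three cases are immediate. As a byproduct, Chapman's situation is the specialization $d=-c$ of case (iii), which yields the promised short proof of Theorem \ref{chaptheo}.
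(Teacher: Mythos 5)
Your reduction to binomials, the scalar criterion $b\,d^{M-1}\neq(-1)^{M}a\,\beta\,c^{M-1}$, the observation that for $\delta>1$ this holds automatically because $\beta\notin\mathbb{F}_q$, and the three ground-level case analyses all run parallel to the paper: your criterion is exactly the one produced by Lemma~\ref{mainlemma}~iii) applied to $\sigma^{-1}$, and your passage from $w$ to $\rho$ (coordinates of $1$ in the normal basis, the trace via logarithmic derivative, the circulant determinant) is correct and is a genuinely different, pleasant route to that criterion. The problem is the engine on which all of this rests: the Vandermonde proof that $w=1/(v+e)$ is normal is not valid, and without it you cannot even speak of ``the normal basis generated by $w$'' in the next step.

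Concretely: you expand $w^{q^j}=1/(\zeta_j v+e)=C\sum_{r=0}^{N-1}\zeta_j^{r}(-v/e)^{r}$ with $N=t^k$, and deduce $\mathbb{F}_q$-linear independence of the conjugates from $\det(\zeta_j^{r})\neq 0$. But the quantities $\zeta_j^{r}$ are not coordinates with respect to the $\mathbb{F}_q$-vector-space structure: $\zeta_j=v^{q^j-1}$ lies in $\mathbb{F}_{q^{N}}$, and the $N$-th roots of unity lie in $\mathbb{F}_q$ only when $N\mid q-1$, which fails for all large $k$. Expansions in the basis $\{(-v/e)^{r}\}$ with coefficients taken from $\mathbb{F}_{q^{N}}$ are not unique --- for instance $(v/e)\cdot 1+1\cdot(-v/e)=0$ --- so from a putative dependence $\sum_j\lambda_j w^{q^j}=0$ with $\lambda_j\in\mathbb{F}_q$ you only obtain $\sum_r\nu_r(-v/e)^{r}=0$ with $\nu_r=C\sum_j\lambda_j\zeta_j^{r}\in\mathbb{F}_{q^{N}}$, and nonsingularity of the Vandermonde matrix gives no way to conclude $\nu_r=0$. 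Every rewriting of this argument ends the same way: one produces a polynomial of degree $<N$ with coefficients in $\mathbb{F}_{q^{N}}$ vanishing at $v$, which contradicts nothing, since irreducibility of $x^{N}-B$ only controls polynomials with coefficients in $\mathbb{F}_q$. The claim you want is true, but to prove it you must regroup the powers $v^{r}$ along the orbits of $r\mapsto qr \bmod N$ so that each block has coefficients in the base field --- which is precisely the Semaev decomposition (Theorem~\ref{theorepre}) underlying the paper's Main Lemma. The repair is immediate once you use it: over $K$ one has $w=C'\sum_{r=0}^{M-1}(-1/e)^{r}v^{r}$ with all coefficients nonzero and in $K$, so every Semaev component of $w$ is nonzero and $w$ is normal; but with that repair your proof becomes, in substance, the paper's. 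One last shared blemish: in case (ii) your argument, like the paper's, needs $\ell\mid t^{k}$ and hence $k\geq 1$; at $k=0$ the criterion degenerates to $A\neq 0$, which neither set of hypotheses visibly guarantees.
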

In section 4, we show how to recover Chapman's result from Theorem \ref{theonormal}. In section 5, we prove a Theorem more general that Theorem \ref{theonormal} in that the starting polynomial does not need to be linear. We also provide explicit examples.
\section{Preliminaries in Algebra}
This section is a brief summary about the basic results that we will use in the proof of Theorem \ref{theonormal}. For more details and proofs see \cite{BLAKE1997227} and \cite{Semaev_1989}.

Let $\tau: \overline{\mathbb{F}}_q \to \overline{\mathbb{F}}_q$ be the Frobenius map defined by $\tau (\alpha)=\alpha^q$. For every $n \in \mathbb{N}$, $\tau $ induces an automorphism of $\mathbb{F}_{q^n}$ that fixes $\mathbb{F}_q$. 
Let $f(x)=\sum_{i=0}^ma_ix^i$ be a polynomial in $\mathbb{F}_q[x]$, we define $f(\tau)$ $\in \text{End}(\mathbb{F}_{q^n})$  by $f(\tau)(\alpha)=\sum_{i=0}^ma_i\tau^i(\alpha)$.

Assume that $(n,q)=1$ and let
\begin{equation*}
    x^n-1=p_1p_2\ldots p_r
\end{equation*}
be the factorization of $x^n-1$ into irreducible factors over $\mathbb{F}_q[x]$. Samaev \cite{Semaev_1989} gives the following characterisation of normal elements in the extension $\mathbb{F}_{q^n}/\mathbb{F}_q$. 
\begin{theorem}[\cite{Semaev_1989}, Lemma 2.1]
Let $W_l=\{\alpha\in \mathbb{F}_{q^n} : p_l(\tau)(\alpha)=0\}$ for $l=1,\ldots,r$. Then 
\begin{equation*}
    \mathbb{F}_{q^n}=\bigoplus_{l=1}^rW_l
\end{equation*}
is a direct sum, where each $W_l$ is a vector space  over $\mathbb{F}_q$ of dimension $\deg p_l$. Furthermore, an element $\alpha\in \mathbb{F}_{q^n}$  is normal if and only if for $\alpha=\sum_{l=1}^r \alpha_l$ with $\alpha_l \in W_l$, we have that  $\alpha_l\neq 0$ for all $l=1,\ldots,r$.
\end{theorem}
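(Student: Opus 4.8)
The plan is to recast the statement in the language of modules over the ring $A=\mathbb{F}_q[x]/(x^n-1)$, where the indeterminate $x$ acts on $\mathbb{F}_{q^n}$ as the Frobenius $\tau$. Since every $\alpha\in\mathbb{F}_{q^n}$ satisfies $\tau^n(\alpha)=\alpha^{q^n}=\alpha$, the operator $\tau$ obeys $\tau^n=\mathrm{id}$, so this action factors through $A$ and turns $\mathbb{F}_{q^n}$ into an $A$-module; under this identification $p_l(\tau)$ is multiplication by the class of $p_l$, and $W_l$ is precisely the kernel of that multiplication. Everything in the statement should then become a consequence of the structure of $A$ together with the cyclicity of $\mathbb{F}_{q^n}$ as an $A$-module.

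The first genuine input I would establish is that the minimal polynomial of $\tau$ as an $\mathbb{F}_q$-linear operator is exactly $x^n-1$. On one hand it divides $x^n-1$ because $\tau^n=\mathrm{id}$; on the other hand, the distinct automorphisms $\mathrm{id},\tau,\ldots,\tau^{n-1}$ of $\mathbb{F}_{q^n}$ are linearly independent over $\mathbb{F}_{q^n}$ by Dedekind's lemma on independence of characters, so no nonzero polynomial over $\mathbb{F}_q$ of degree $<n$ can annihilate $\tau$, forcing the minimal polynomial to have degree $\geq n$. Because $\dim_{\mathbb{F}_q}\mathbb{F}_{q^n}=n$ equals the degree of the minimal polynomial, the minimal and characteristic polynomials coincide, so $\mathbb{F}_{q^n}$ is a cyclic $A$-module and $\mathbb{F}_{q^n}\cong A$ as $A$-modules. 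This is the module-theoretic form of the normal basis theorem, and it is the step on which the dimension count rests.

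Next I would use the hypothesis $(n,q)=1$, which makes $x^n-1$ separable, so that $p_1,\ldots,p_r$ are pairwise coprime. The Chinese Remainder Theorem then yields a ring isomorphism $A\cong\prod_{l=1}^r\mathbb{F}_q[x]/(p_l)$ together with orthogonal idempotents $e_1,\ldots,e_r$, each factor $\mathbb{F}_q[x]/(p_l)$ being the field $\mathbb{F}_{q^{\deg p_l}}$. Transporting this through $\mathbb{F}_{q^n}\cong A$, I would check that multiplication by $p_l$ is invertible on every factor except the $l$-th and is zero on the $l$-th; hence its kernel $W_l$ equals $e_l\cdot\mathbb{F}_{q^n}$, an $\mathbb{F}_q$-space of dimension $\deg p_l$, and orthogonality of the idempotents gives the direct sum $\mathbb{F}_{q^n}=\bigoplus_{l=1}^r W_l$ with the component of $\alpha$ in $W_l$ being $\alpha_l=e_l\cdot\alpha$.

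Finally, for the normality criterion I would use that $\{\alpha,\tau\alpha,\ldots,\tau^{n-1}\alpha\}$ is $\mathbb{F}_q$-linearly independent—hence a basis, since it has $n$ elements—exactly when $\alpha$ generates $\mathbb{F}_{q^n}$ as an $A$-module. Writing $\alpha=\sum_l\alpha_l$, the submodule $A\cdot\alpha$ decomposes as $\bigoplus_l A\cdot\alpha_l$, and on each summand $W_l\cong\mathbb{F}_{q^{\deg p_l}}$, which is a one-dimensional vector space over the field $\mathbb{F}_q[x]/(p_l)$, the cyclic submodule generated by $\alpha_l$ is all of $W_l$ when $\alpha_l\neq 0$ and is zero otherwise. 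Thus $A\cdot\alpha=\bigoplus_{l:\,\alpha_l\neq 0}W_l$, which equals $\mathbb{F}_{q^n}$ if and only if $\alpha_l\neq 0$ for every $l$, giving the asserted equivalence. I expect the main obstacle to be pinning down $\dim_{\mathbb{F}_q}W_l=\deg p_l$ exactly, rather than merely a multiple of $\deg p_l$; this is precisely what the cyclicity of the module (equivalently, minimal polynomial $=$ characteristic polynomial) supplies, so the crux of the argument is the minimal-polynomial computation of the second paragraph, after which the remaining claims are Chinese Remainder Theorem bookkeeping.
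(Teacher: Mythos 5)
Your proof is correct, but note that there is nothing in the paper to compare it against: the statement is quoted verbatim as Lemma 2.1 of Semaev, and the paper's preliminaries section explicitly defers all proofs to \cite{Semaev_1989} and \cite{BLAKE1997227}, so the author supplies no argument of their own. Your route is the standard one from those sources: view $\mathbb{F}_{q^n}$ as a module over $A=\mathbb{F}_q[x]/(x^n-1)$ with $x$ acting as $\tau$, prove via Dedekind's independence of characters that the minimal polynomial of $\tau$ is exactly $x^n-1$ (hence $\mathbb{F}_{q^n}\cong A$ as $A$-modules, which is the normal basis theorem in module form), use $(n,q)=1$ to get separability of $x^n-1$ and hence pairwise coprime $p_l$, and decompose by the Chinese Remainder Theorem. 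Each step checks out: the identification $W_l=e_l\cdot\mathbb{F}_{q^n}$ is right because multiplication by $p_l$ is zero on the $l$-th CRT factor and invertible (a nonzero element of a field) on the others; and your reduction of normality to $A\cdot\alpha=\mathbb{F}_{q^n}$, together with $A\cdot\alpha=\bigoplus_{l:\,\alpha_l\neq 0}W_l$ since each $W_l$ is one-dimensional over the field $\mathbb{F}_q[x]/(p_l)$, gives the stated criterion. You also correctly flag the only delicate point, namely that $\dim_{\mathbb{F}_q}W_l=\deg p_l$ exactly (not merely a multiple of it), and that this is precisely what the cyclicity of the module supplies; without the minimal-polynomial computation the primary decomposition alone would not pin the dimensions down.
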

Each $V_l$ is irreducible $\tau$-invariant i.e. $\tau (W_l) \subset W_l$ and it has no proper $\tau$-invariant subspace. Also the decomposition is unique, namely, if there exist $\{V_l\}_{l=1}^s$ irreducible $\tau$-invariant sub-spaces such that
\begin{equation*}
    \mathbb{F}_{q^n}=\bigoplus_{l=1}^sV_l,
\end{equation*}
then $s=r$ and, after rearranging the order of $V_l$'s if necessary, $V_l=W_l$ for $l =1,\ldots,r$.

We define an equivalence relation $\sim$ on $\{0,1\ldots,n-1\}$  by $l_1\sim l_2$ iff $l_1\equiv l_2q^{i} \mod n$ for some $i$. For $l \in \{0,1,\ldots,n-1\}$, let $M_l$ be its equivalence class. Then 
\begin{equation*}
    |M_l|=\min \{m \geq 1 : l\equiv lq^{m} \mod n\}
\end{equation*}
and there exist $S\subset \{0,1\ldots,n-1\}$ such that $\cup_{l\in S} M_l = \{0,1\ldots,n-1\}$ and $M_{l}\cap M_j=\emptyset $ if $l,j\in S$ and $l\neq j$. 
\begin{theorem}[\cite{Semaev_1989}, Theorem 2.2]  \label{theorepre}
Let $x^n-A\in \mathbb{F}_q[x]$ be an irreducible polynomial. Let $\theta\in \mathbb{F}_{q^n}$ be a root of $x^n-A$ and let $S,M_l$ be the sets defined previously. For $l \in S$, let $V_l$ be the sub-space of $\mathbb{F}_{q^n}$ spanned over $\mathbb{F}_q$ by the elements $\{\theta^m:m \in M_l\} $. Then each $V_l$ is an irreducible $\tau $-invariant subspace and
\begin{equation*}
    \mathbb{F}_{q^n}=\bigoplus_{l\in S}V_l.
\end{equation*}
In particular, an element $\alpha\in \mathbb{F}_{q^n}$ is normal if and only if for $\alpha=\sum_{l\in S} \alpha_l$ with $\alpha_l \in V_l$, we have that $\alpha_l\neq 0$ for all $l\in S$.
\end{theorem}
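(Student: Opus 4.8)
The plan is to produce the decomposition by exhibiting an explicit basis adapted to the cyclotomic cosets, to check $\tau$-invariance by a direct computation, and then to deduce irreducibility of each $V_l$ from the structural fact that $\mathbb{F}_{q^n}$ is a \emph{multiplicity-free} semisimple module for the Frobenius. First I would record that, since $x^n-A$ is irreducible of degree $n$ with root $\theta$, the set $\{\theta^m : 0\le m\le n-1\}$ is an $\mathbb{F}_q$-basis of $\mathbb{F}_{q^n}=\mathbb{F}_q(\theta)$. Writing $qm=n\lfloor qm/n\rfloor+(qm\bmod n)$ and using $\theta^n=A\in\mathbb{F}_q$ gives the Frobenius action on these basis vectors,
\[
\tau(\theta^m)=\theta^{qm}=A^{\lfloor qm/n\rfloor}\,\theta^{\,qm\bmod n}.
\]
Since $qm\equiv m\cdot q\pmod n$, the exponent $qm\bmod n$ lies in the same class $M_l$ as $m$; hence $\tau$ sends the spanning set $\{\theta^m:m\in M_l\}$ of $V_l$ into itself up to nonzero scalars, so each $V_l$ is $\tau$-invariant with $\dim_{\mathbb{F}_q}V_l=|M_l|$. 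Because the classes $\{M_l\}_{l\in S}$ partition $\{0,\dots,n-1\}$, the corresponding bases partition $\{\theta^m\}_{m=0}^{n-1}$, which yields at once the $\mathbb{F}_q$-space decomposition $\mathbb{F}_{q^n}=\bigoplus_{l\in S}V_l$ with each summand $\tau$-invariant.

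The heart of the argument is the irreducibility of the $V_l$, and here I would avoid computing $\tau\rvert_{V_l}$ explicitly and argue instead by module theory. Viewed as an $\mathbb{F}_q[x]$-module with $x$ acting as $\tau$, the space $\mathbb{F}_{q^n}$ is annihilated by $x^n-1$, which is squarefree because $(n,q)=1$; hence $\mathbb{F}_{q^n}$ is semisimple. Moreover its simple summands $W_i=\ker p_i(\tau)$ from Semaev's Lemma 2.1 are pairwise non-isomorphic, since $\tau$ acts on $W_i$ with minimal polynomial the distinct irreducible $p_i$. In a multiplicity-free semisimple module every submodule is the direct sum of a subset of the $W_i$ (by Schur's lemma any simple submodule projects isomorphically to exactly one $W_i$ and trivially to the others, forcing it to equal that $W_i$). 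Applying this to each $\tau$-invariant $V_l$ gives $V_l=\bigoplus_{i\in T_l}W_i$ for some nonempty $T_l$, and from $\bigoplus_{l}V_l=\bigoplus_{i}W_i$ the sets $T_l$ partition $\{1,\dots,r\}$.

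To close the pigeonhole I would invoke the classical bijection between irreducible factors of $x^n-1$ over $\mathbb{F}_q$ and the $q$-cyclotomic cosets modulo $n$, which are precisely the classes $M_l$ indexed by $S$; this gives $r=|S|$, so we are partitioning an $r$-element set into $|S|=r$ nonempty blocks. Hence every $T_l$ is a singleton, each $V_l$ equals a single $W_i$, and in particular each $V_l$ is irreducible $\tau$-invariant. The final assertion then needs no new work: we now have two decompositions of $\mathbb{F}_{q^n}$ into irreducible $\tau$-invariant subspaces, so by the uniqueness recalled after Semaev's Lemma 2.1 the family $\{V_l\}_{l\in S}$ coincides with $\{W_i\}$ up to reindexing, and the normal-element criterion of that lemma transfers verbatim to the components along the $V_l$.

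I expect the main obstacle to be precisely this irreducibility step. The computations of Steps 1--3 are routine, but the clean pigeonhole depends on the equality $|S|=r$, so the one external input I must justify carefully is the coset--factor correspondence for $x^n-1$. The alternative route, proving the binomial $x^{|M_l|}-\lambda$ irreducible by direct means (with $\lambda=A^{\,m_0(q^{|M_l|}-1)/n}$ the scalar by which $\tau^{|M_l|}$ acts on $\theta^{m_0}$), is workable but forces an appeal to binomial-irreducibility criteria; I would keep it only as a fallback should the semisimplicity argument need more hypotheses than are available here.
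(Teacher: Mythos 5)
Your proof is correct, but there is nothing in the paper to compare it against: the paper states this result verbatim with the citation [Semaev, Theorem~2.2] and gives no proof, so the assessment below is of your argument on its own merits. The computational part is handled correctly: since $x^n-A$ is irreducible, $A\neq 0$ and $(n,q)=1$ (the paper assumes this throughout Section~2, and it also follows because $p\mid n$ would make $x^n-A$ a $p$-th power), so $\tau(\theta^m)=A^{\lfloor qm/n\rfloor}\theta^{\,qm\bmod n}$ indeed permutes each set $\{\theta^m : m\in M_l\}$ up to nonzero scalars, giving $\tau$-invariance, $\dim V_l=|M_l|$, and the direct sum from the partition of the power basis. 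Your module-theoretic irreducibility step is also sound: $\tau^n=\mathrm{id}$ and squarefreeness of $x^n-1$ give semisimplicity, Semaev's Lemma~2.1 (also quoted in the paper) supplies $\dim W_i=\deg p_i$, hence multiplicity one, and your Schur-lemma observation that every submodule of a multiplicity-free semisimple module is a partial sum of the $W_i$ is standard. The one load-bearing external input is exactly the one you flag, the bijection between irreducible factors of $x^n-1$ over $\mathbb{F}_q$ and the $q$-cyclotomic cosets modulo $n$; since the paper's classes $M_l$ are precisely those cosets, $|S|=r$ and your pigeonhole closes, after which the final normality criterion transfers via the uniqueness of the decomposition recalled after Lemma~2.1. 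By contrast, your fallback route (which is closer in spirit to Semaev's original and to the Gao-thesis style arguments the paper draws on) would show that $V_l$ is cyclic, generated by $\theta^{m_0}$, with minimal polynomial $x^{|M_l|}-\lambda$ where $\lambda=A^{m_0(q^{|M_l|}-1)/n}$, and then invoke a binomial irreducibility criterion such as the paper's Proposition~5.1; that route is more self-contained but heavier, while yours trades the binomial criterion for the classical coset--factor correspondence and gets irreducibility of all the $V_l$ at once by counting. One small caution if you ever execute the fallback: $\tau^{|M_l|}$ is scalar on the generator $\theta^{m_0}$ but not on all of $V_l$ (the eigenvalues $A^{m(q^{|M_l|}-1)/n}$ vary with $m\in M_l$), so the minimal-polynomial argument must go through cyclicity, as your careful phrasing already suggests.
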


\section{Main Lemma}
If $f$ is a degree $n$ polynomial in $\mathbb{F}_q[x]$, the polynomial $P_\sigma(f)$ is given by
\begin{equation*}
P_{\sigma}(f)(x)=(cx+d)^nf\left(\frac{ax+b}{cx+d}\right).
\end{equation*}
An  useful result about this transformation is the following Proposition.
\begin{proposition} [\cite{bassa2019rtransform}, Proposition 3.1 and Lemma 3.2] \label{theopsigma}
Let $f\in \mathbb{F}_q[x]$ be a polynomial of degree $n$ and $\sigma=\begin{pmatrix}
a & b\\ 
c & d
\end{pmatrix} \in {\rm GL}_2(\mathbb F_q)$.  Then 
\begin{enumerate}[i)]
    \item $P_\sigma(f)$ is of degree $n$ if and only if $f(\sigma\cdot \infty)\neq 0$.
    \item If $f$ is irreducible, then $P_\sigma(f)$ is irreducible in $\mathbb{F}_q[x]$
    \item If $f$ satisfies $f(\sigma \cdot\infty)\neq 0$  and $\alpha_1,\ldots , \alpha_n$ are the roots of $P_\sigma(f)$, then the roots of $f$ are given by $\beta_1,\ldots , \beta_n$ with 
    \begin{equation*}
        \beta_i =\sigma \cdot \alpha_i, \quad\text{   for }\quad  i=1,\ldots, n 
    \end{equation*}
\end{enumerate}
\end{proposition}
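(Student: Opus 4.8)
The plan is to derive all three parts from a single factored expression for $P_\sigma(f)$. Write $f_n$ for the leading coefficient of $f$ and factor $f(y)=f_n\prod_{i=1}^n(y-\beta_i)$ over $\overline{\mathbb{F}}_q$, where $\beta_1,\dots,\beta_n$ are the roots of $f$ with multiplicity. Substituting $y=(ax+b)/(cx+d)$ into this factorization and distributing the denominator $(cx+d)^n$ over the $n$ factors yields
\begin{equation*}
P_\sigma(f)(x)=f_n\prod_{i=1}^n\bigl[(a-c\beta_i)x+(b-d\beta_i)\bigr].
\end{equation*}
This identity, which is only a rearrangement of the definition of $P_\sigma$, is the key object: each of the three statements can be read off from the behaviour of the individual linear factors $(a-c\beta_i)x+(b-d\beta_i)$.

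For part i), the polynomial on the right has degree equal to the number of indices $i$ for which $a-c\beta_i\neq 0$, since a factor with $a-c\beta_i=0$ collapses to the nonzero constant $b-d\beta_i=(bc-ad)/c$. Hence $\deg P_\sigma(f)=n$ exactly when $a-c\beta_i\neq 0$ for every $i$, i.e.\ when no root $\beta_i$ equals $a/c=\sigma\cdot\infty$, which is precisely the condition $f(\sigma\cdot\infty)\neq0$. (When $c=0$ every leading coefficient equals $a\neq0$, so the degree is always $n$, while $\sigma\cdot\infty=\infty$ is never a root, and the equivalence persists under the stated convention.) Equivalently, one checks that the coefficient of $x^n$ equals $f_n\prod_i(a-c\beta_i)=c^nf(\sigma\cdot\infty)$. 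For part iii), assume $f(\sigma\cdot\infty)\neq0$; then every factor above is genuinely linear, with unique root $x=(d\beta_i-b)/(a-c\beta_i)$. Comparing this with $\sigma^{-1}=\tfrac{1}{\det\sigma}\left(\begin{smallmatrix} d & -b\\ -c & a\end{smallmatrix}\right)$ shows that this root is exactly $\sigma^{-1}\cdot\beta_i$, so the roots $\alpha_i$ of $P_\sigma(f)$ satisfy $\beta_i=\sigma\cdot\alpha_i$, as claimed.

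For part ii) I would combine the factored identity with the Frobenius-equivariance of the M\"obius action. There are two ingredients. First, \emph{degree preservation}: if $f$ is irreducible of degree $n\ge 2$ it has no root in $\mathbb{F}_q$, so in particular $\sigma\cdot\infty\in\mathbb{F}_q\cup\{\infty\}$ is not a root, whence $f(\sigma\cdot\infty)\neq0$ and $\deg P_\sigma(f)=n$ by part i). Second, \emph{orbit preservation}: because $\sigma$ has entries in $\mathbb{F}_q$, the identity $(\sigma^{-1}\cdot\alpha)^q=\sigma^{-1}\cdot\alpha^q$ holds for all $\alpha\in\overline{\mathbb{F}}_q\cup\{\infty\}$, so the map $\alpha\mapsto\sigma^{-1}\cdot\alpha$ commutes with $\tau$. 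Since $f$ is separable (finite fields are perfect), its roots $\beta_1,\dots,\beta_n$ form a single Frobenius orbit of $n$ distinct points; applying the injective, Frobenius-equivariant map $\alpha\mapsto\sigma^{-1}\cdot\alpha$ carries them to the $n$ distinct roots $\alpha_i=\sigma^{-1}\cdot\beta_i$ of $P_\sigma(f)$, which therefore again form a single Frobenius orbit of size $n$. A degree-$n$ polynomial whose $n$ roots constitute one Galois orbit over $\mathbb{F}_q$ is a constant times an irreducible polynomial, so $P_\sigma(f)$ is irreducible.

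I expect the only genuine obstacle to lie in part ii), and there it is a matter of care rather than depth at two points. First, the degenerate low-degree case must be isolated: for $n=1$, $P_\sigma(x-\beta)$ drops to a nonzero constant exactly when $\beta=\sigma\cdot\infty$, so irreducibility in the strict sense requires the non-degeneracy hypothesis $\beta\neq a/c$ that the paper builds into its running assumptions (the condition $g\neq x-a/c$). Second, the Frobenius-equivariance identity $(\sigma^{-1}\cdot\alpha)^q=\sigma^{-1}\cdot\alpha^q$ must be verified uniformly across the finite and infinite branches of the definition of the M\"obius action; this is the conceptual heart of the argument, but once it is in place the orbit-counting conclusion is immediate and the remaining steps are bookkeeping.
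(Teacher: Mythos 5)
Your proposal is correct, but there is nothing in this paper to compare it against line by line: the proposition is quoted from Bassa--Menares (\cite{bassa2019rtransform}, Proposition 3.1 and Lemma 3.2) and the paper supplies no proof of its own, only the citation. Judged on its own merits, your argument is sound and essentially the standard one. The factorization $P_\sigma(f)(x)=f_n\prod_{i=1}^n\bigl[(a-c\beta_i)x+(b-d\beta_i)\bigr]$ is a correct rearrangement of the definition, and it does carry all three parts: the leading coefficient is $f_n\prod_i(a-c\beta_i)=c^nf(\sigma\cdot\infty)$ when $c\neq 0$, a collapsed factor equals the nonzero constant $(bc-ad)/c$ by invertibility of $\sigma$ (so the degree drops by exactly the number of roots equal to $a/c$), and the root of each surviving linear factor is $\sigma^{-1}\cdot\beta_i$ by the explicit formula for $\sigma^{-1}$, giving i) and iii). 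For ii), your two ingredients --- degree preservation because an irreducible $f$ of degree $n\geq 2$ has no root in $\mathbb{F}_q\cup\{\infty\}$, and Frobenius equivariance $(\sigma^{-1}\cdot\alpha)^q=\sigma^{-1}\cdot\alpha^q$ (which holds on both branches of the M\"obius action since $a,b,c,d$ are fixed by $\tau$, and which never sends a root to $\infty$ because $\beta_i\neq\sigma\cdot\infty$) --- correctly yield that the $n$ distinct roots of $P_\sigma(f)$ form a single Galois orbit, forcing $P_\sigma(f)$ to be a constant times the minimal polynomial of any of its roots. Your isolation of the degenerate case is a genuine point of care: for $f=x-a/c$ with $c\neq 0$, $P_\sigma(f)$ is a nonzero constant, so part ii) as quoted is literally false without the proviso $f\neq x-a/c$; this is consistent with the paper, which builds exactly that hypothesis ($g\neq x-a/c$, and later $g_k(\sigma\cdot\infty)\neq 0$) into every application of the proposition.
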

Using Theorem \ref{theorepre} and Proposition \ref{theopsigma}, we can find conditions such that  the polynomial $P_\sigma(f)$ will be normal over $\mathbb{F}_q[x]$. The following Lemma is motivated by [\cite{Gaothesis} Theorem 3.4.1].
\begin{lemma}[Main Lemma] \label{mainlemma}
Let $\sigma=\begin{pmatrix}
a & b\\ 
c & d
\end{pmatrix} \in {\rm GL}_2(\mathbb F_q)$ and  $f(x)=x^n-A\neq x-a/c$ be an irreducible polynomial in  $\mathbb{F}_q[x]$. Let $g=P_\sigma(f)$ and $\alpha$ a root of $g$. Then:
\begin{enumerate}[i)]
    \item If $a=0$, then $g$ is normal if and only if $d\neq 0$, $n=p$, with $p$ prime and is $q$ a primitive root  $\mod p$.
    \item If $c=0$, then $g$ is normal if and only if $b\neq 0$, $n=p$, with $p$ prime and $q$ is  a primitive root  $\mod
    p$.
    \item If $ac\neq 0$, then $g$ is normal if and only if $c^{n-1}dA-ba^{n-1}\neq 0$ if and only if the trace of $\alpha$ over $\mathbb{F}_q$ is not zero.
\end{enumerate}
\begin{proof}
Since $f$ is irreducible and different from  $x-a/c$, then $f(\sigma\cdot \infty)\neq 0$. Let $\alpha$ be a root of $g$, by Proposition \ref{theopsigma} $iii)$, then $\theta=\sigma\cdot\alpha$ is a root of $f(x)=x^n-A$ and $\sigma^{-1}\cdot\theta=\alpha$.\\
Suppose first  that $ac=0$. If $a=0$, then $c\neq 0$ since $\det\sigma\neq 0$, so ${\sigma^{-1}=\frac{1}{-bc}\begin{pmatrix}
d & -b\\ 
-c & 0
\end{pmatrix}}$. Therefore
\begin{equation*}
\alpha=\frac{d\theta-b}{-c\theta}=\frac{dA-b\theta^{n-1}}{-cA} = -\frac{d}{c}+\frac{b}{cA}\theta^{n-1}= \alpha_0+\alpha_{n-1} \qquad \alpha_0 \in V_0, \alpha_1\in V_{n-1},
\end{equation*}
where $V_0$ and $V_{n-1}$ are the sets defined in Theorem \ref{theorepre}. It follows that $\alpha$ is normal if and only if $d\neq 0$  and $M_{n-1}=\{1,2,\ldots,n-1\}$. Since
\begin{equation*}
    |M_{n-1}|=\min \{m \in \mathbb{N}|(n-1)q^{m-1}\equiv n-1 \mod n \}= \min \{m \in \mathbb{N}|q^{m-1}\equiv 1 \mod n\}
\end{equation*}
 and $(q,n)=1$,  then $|M_{n-1}|$ divides $\phi(n)$, where $\phi$ is the Euler's totient function. In particular ${|M_{n-1}|\leq \phi(n)}$  so $M_{n-1}=\{1,2,\ldots,n-1\}$ iff $|M_{n-1}|=n-1\leq \phi (n)$ iff $n=p$ prime and $q$ is a primitive root  $\mod n$. This proves $i)$.\\
If $c=0$, then $a\neq 0$ since $\det \sigma \neq 0$, so ${\sigma^{-1}=\frac{1}{ad}\begin{pmatrix}
d & -b\\ 
0 & a
\end{pmatrix}}$ and 
\begin{equation*}
    \alpha=\frac{d\theta -b}{a}=-\frac{b}{a}+\frac{d}{a}\theta=\alpha_0+\alpha_1 \qquad \alpha_0\in V_0, \alpha_1 \in V_1
\end{equation*}
Then using Theorem \ref{theorepre} we see that $\alpha$ is normal if and only if $b\neq 0$ and $M_1=\{1,2\ldots, n-1\}$ and by a similar argument this is equivalent to $n=p$ with $p$ prime and $q$  a primitive root  $\mod n$, so this proves  $ii)$.

Now suppose that $ac\neq 0$. Since $f(\sigma\cdot \infty)\neq 0$ i.e. $a^n-c^nA\neq 0$, then
\begin{align*}
    \alpha&= \frac{d\theta-b}{-c\theta+a} \cdot \left( \frac{\sum_{i=0}^{n-1}a^{i}(c\theta)^{n-1-i}}{\sum_{i=0}^{n-1}a^{i}(c\theta)^{n-1-i}}\right)\\ &=\frac{\sum_{i=1}^{n-1}\theta^i[da^{n-i}c^{i-1}-bc^{i}a^{n-1-i}]+c^{n-1}dA-ba^{n-1}}{a^n-c^nA} \\&=\frac{\sum_{i=1}^{n-1}\theta^i(c/a)^{i-1}a^{n-2}\det\sigma +c^{n-1}dA-ba^{n-1}}{a^n-c^nA} \\ &=\sum_{l\in S}\alpha_l,
\end{align*}
where
\begin{equation*}
\alpha_l=\begin{cases}
\dfrac{a^{n-2}\det \sigma}{a^n-c^nA}\displaystyle  \sum_{i\in M_l}\theta^i(c/a)^{i-1} & \text{ if}\quad l\neq 0\vspace{0.22cm}\\ 
 \dfrac{c^{n-1}dA-ba^{n-1}}{a^n-c^nA}& \text{ if }\quad l=0
\end{cases}.
\end{equation*}
\textbf{Claim}: $\alpha_l\neq 0$ for all $l\neq 0$.

Assume for contradiction that $\alpha_l=0$ for some $l$ different from zero. Then 
\begin{equation*}
    \frac{a^{n-2}\det \sigma}{a^n-c^nA} \sum_{i\in M_l}\theta^i(c/a)^{i-1}=0, 
\end{equation*}
    and since $a,c,\det \sigma \neq 0$, this relation defines  a polynomial in $\mathbb{F}_q[x]$ with degree less than $n$ having  $\theta$ as a root, but $f(x)= x^n-A$ is irreducible and $f(\theta)=0$, so  $\alpha_l\neq 0$.
    
   By using Theorem \ref{theorepre}, we conclude that $\alpha$ is normal if and only if 
   \begin{equation*}
        \alpha_0=\frac{c^{n-1}dA-ba^{n-1}}{a^n-c^nA}\neq 0,
   \end{equation*}
   if and only if $c^{n-1}dA-ba^{n-1}\neq 0$.

   Otherwise, since $g$ is irreducible by Theorem \ref{theopsigma}, $g$ is the minimal polynomial for $\alpha$ over $\mathbb{F}_q[x]$ and 
   \begin{equation*}
   \begin{split}
       g&=P_\sigma(x^n-A)\\&=(ax+b)^n-A(cx+d)^n\\ &=x^n(a^n-c^nA)+x^{n-1}(nba^{n-1}-nc^{n-1}dA)+\text{ terms of degree }  <(n-1),
       \end{split}
   \end{equation*}
  then the trace of $\alpha$ over $\mathbb{F}_q$ is $n\left( \frac{c^{n-1}dA-ba^{n-1}}{a^n-c^nA}\right)$ and since $x^n-A$ is irreducible, necessary $(n,q)=1$. This relation  proves $iii)$.
\end{proof} 
\end{lemma}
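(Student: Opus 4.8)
The plan is to pull each root $\alpha$ of $g=P_\sigma(f)$ back to a root $\theta$ of $f(x)=x^n-A$ through the M\"obius action, to write $\alpha=\sigma^{-1}\cdot\theta$ as an explicit $\mathbb{F}_q$-linear combination of powers of $\theta$, and then to read off the components of $\alpha$ in Semaev's decomposition $\mathbb{F}_{q^n}=\bigoplus_{l\in S}V_l$ of Theorem \ref{theorepre}, where $V_l=\operatorname{span}_{\mathbb{F}_q}\{\theta^m:m\in M_l\}$. By the normality criterion of that theorem, $\alpha$ is normal if and only if every component is nonzero, so in each case the lemma reduces to deciding which components can vanish.

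First I would record the standing facts. Since $f$ is irreducible and different from $x-a/c$, Proposition \ref{theopsigma} gives $f(\sigma\cdot\infty)\neq0$, hence $\deg g=n$, and its root $\alpha$ satisfies $\theta=\sigma\cdot\alpha$ and $\alpha=\sigma^{-1}\cdot\theta$. I then split on the vanishing of $a$ and $c$, matching the trichotomy of the statement. In the two degenerate cases $a=0$ (which forces $c\neq0$) and $c=0$ (which forces $a\neq0$), inverting $\sigma$ produces an affine expression for $\alpha$ with only two terms; after reducing the power by means of $\theta^n=A$, the element $\alpha$ lies in $V_0\oplus V_{n-1}$ in case i) and in $V_0\oplus V_1$ in case ii). Normality therefore requires, beyond the nonvanishing of the surviving entry ($d\neq0$ in case i, $b\neq0$ in case ii), that these be the only two blocks, i.e.\ $M_{n-1}=\{1,\dots,n-1\}$ (resp.\ $M_1=\{1,\dots,n-1\}$). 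Computing $|M_{n-1}|$ as the multiplicative order of $q$ modulo $n$, which divides $\phi(n)$, this forces $n-1\le\phi(n)$, and that inequality holds precisely when $n=p$ is prime and $q$ is a primitive root modulo $p$.

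The substantive case is $ac\neq0$. Here $\alpha=\dfrac{d\theta-b}{a-c\theta}$, and the key device is to rationalize the denominator by multiplying numerator and denominator by $\sum_{i=0}^{n-1}a^i(c\theta)^{n-1-i}$, so that the denominator telescopes to $a^n-c^n\theta^n=a^n-c^nA$, a nonzero constant by $f(\sigma\cdot\infty)\neq0$. This exhibits $\alpha$ as a polynomial in $\theta$ of degree at most $n-1$ whose coefficient of $\theta^i$ for $1\le i\le n-1$ equals $(c/a)^{i-1}a^{n-2}\det\sigma$, all nonzero, and whose constant term is $(c^{n-1}dA-ba^{n-1})/(a^n-c^nA)$. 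Grouping the middle monomials by the classes $M_l$ yields the components $\alpha_l$, and the only real point is the Claim that $\alpha_l\neq0$ for every $l\neq0$: since $1,\theta,\dots,\theta^{n-1}$ are $\mathbb{F}_q$-linearly independent and the relevant coefficients are all nonzero, no such component can vanish (equivalently, a vanishing would give a nonzero polynomial of degree $<n$ annihilating $\theta$, contradicting that $x^n-A$ is its minimal polynomial). Thus only $\alpha_0$ can fail, and normality is equivalent to $c^{n-1}dA-ba^{n-1}\neq0$.

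Finally I would identify this scalar with the trace. Expanding $g=(ax+b)^n-A(cx+d)^n$, its two top coefficients are $a^n-c^nA$ and $n(ba^{n-1}-c^{n-1}dA)$, so the trace of $\alpha$ over $\mathbb{F}_q$ equals $n(c^{n-1}dA-ba^{n-1})/(a^n-c^nA)=n\,\alpha_0$. Because $x^n-A$ is irreducible we have $(n,q)=1$, so $n\neq0$ in $\mathbb{F}_q$ and $\alpha_0\neq0$ if and only if $\operatorname{Tr}(\alpha)\neq0$, which closes iii). I expect the main obstacle to be purely computational: carrying out the rationalization in case iii) and the attendant coefficient bookkeeping exactly, and confirming that every middle coefficient is nonzero so that the Claim is immediate and the normality criterion collapses to the single condition on $\alpha_0$.
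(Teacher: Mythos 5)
Your proposal is correct and follows essentially the same route as the paper: the same pullback $\alpha=\sigma^{-1}\cdot\theta$, the same use of Semaev's decomposition from Theorem \ref{theorepre}, the same rationalization of the denominator in the case $ac\neq 0$ (with the telescoping to $a^n-c^nA$ and the nonvanishing of the middle coefficients $(c/a)^{i-1}a^{n-2}\det\sigma$), and the same identification of $\alpha_0$ with the trace via the top two coefficients of $(ax+b)^n-A(cx+d)^n$ together with $(n,q)=1$. The computational bookkeeping you anticipate as the main obstacle is exactly what the paper carries out, and your outline already contains every idea needed to complete it.
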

To use  our Main Lemma in the proof of Theorem \ref{theonormal}, we need the following Lemma.
\begin{lemma} \label{fk}
Let  $t\geq 2,\sigma\in \rm{GL}_2(\mathbb{F}_q)$ and $g_0\neq x-a/c$ be as in Theorem \ref{Rsigma}. For $k\geq 0$ define $f_k=P_\sigma(g_k)$. Then, $f_k$ is  irreducible. Moreover, we have that  $f_k(x)=f_0(x^{t^k})$ and $P_{\sigma^{-1}}(f_k)=g_k$.  
\begin{proof}
See proof of Theorem \ref{Rsigma} in \cite{bassa2019rtransform}.
\end{proof}
\end{lemma}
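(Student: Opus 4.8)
The plan is to reduce both identities to the functoriality of the operator $P_\sigma$, and then to isolate the genuinely arithmetic content—irreducibility—as the one step that cannot be made formal. First I would record the composition law for $P_\sigma$. Writing the M\"obius action as a left action, $(\tau\sigma)\cdot x=\tau\cdot(\sigma\cdot x)$, a direct computation starting from
\[
c'(\sigma\cdot x)+d'=\frac{(c'a+d'c)x+(c'b+d'd)}{cx+d}
\]
shows, for $\tau=\begin{pmatrix} a' & b'\\ c' & d'\end{pmatrix}$, that the factor $(cx+d)^n$ cancels and the bottom row of $\tau\sigma$ produces exactly the linear form raised to the $n$-th power, so that on polynomials whose degree is preserved by the transforms involved one has $P_\sigma\circ P_\tau=P_{\tau\sigma}$. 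In particular $P_\sigma\circ P_{\sigma^{-1}}=P_{\sigma^{-1}}\circ P_\sigma=P_I=\mathrm{id}$. The one caveat is degree bookkeeping: by Proposition \ref{theopsigma} i) the operator $P_\sigma$ preserves degree precisely when its argument does not vanish at $\sigma\cdot\infty$, and the hypothesis $g_0\neq x-a/c$ guarantees $g_0(\sigma\cdot\infty)\neq 0$, so $f_0=P_\sigma(g_0)$ has degree $n$.

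With the composition law in hand the two identities follow by induction on $k$. Assuming $f_{k-1}=P_\sigma(g_{k-1})$, the definition of the $R_{\sigma,t}$-transform gives $g_k=P_{\sigma^{-1}}\circ S_t\circ P_\sigma(g_{k-1})=P_{\sigma^{-1}}\bigl(S_t(f_{k-1})\bigr)$, and applying $P_\sigma$ together with $P_\sigma\circ P_{\sigma^{-1}}=\mathrm{id}$ yields
\[
f_k=P_\sigma(g_k)=S_t(f_{k-1}),
\]
that is, $f_k(x)=f_{k-1}(x^t)$. Unwinding the recursion from $f_0$ gives $f_k(x)=f_0(x^{t^k})$ and $\deg f_k=t^k n$, and then $P_{\sigma^{-1}}(f_k)=P_{\sigma^{-1}}(P_\sigma(g_k))=g_k$ follows from $P_{\sigma^{-1}}\circ P_\sigma=\mathrm{id}$, so also $\deg g_k=t^k n$, matching Theorem \ref{Rsigma}. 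I would note that the non-vanishing conditions needed to keep all degrees equal to $t^kn$ hold because the intermediate polynomials have no roots in $\mathbb{F}_q\cup\{\infty\}$ once they are known to be irreducible of degree $>1$; thus the identities and irreducibility are really established by a single simultaneous induction rather than sequentially.

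The main obstacle is the irreducibility of $f_k$, which is genuinely arithmetic and is exactly where the hypotheses of Theorem \ref{Rsigma} enter. Since $g_0$ is irreducible and $g_0\neq x-a/c$, Proposition \ref{theopsigma} ii) shows $f_0=P_\sigma(g_0)$ is irreducible of degree $n$. To pass from $f_0$ to $f_k=f_0(x^{t^k})$ one invokes the classical criterion for irreducibility of the substitution $x\mapsto x^t$ over $\mathbb{F}_q$ (Serret; the order-based criterion as in Lidl--Niederreiter): each prime $\ell\mid t$ must divide the order of the roots of $f_0$ but not the complementary factor $(q^{\deg f_0}-1)/e$, with an extra $2$-adic condition when $4\mid t$. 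The substance of the hypotheses is that, here, ``$\eta(g;\sigma)$ is not an $\ell$-th power in $\mathbb{F}_q$ for every prime $\ell\mid t$'' is precisely what encodes non-divisibility of that complementary factor by $\ell$ (through a norm computation, using that every prime factor of $t$ divides $q-1$), while the parity condition on $n$ when $q\equiv 3\bmod 4$ and $t$ is even disposes of the $2$-adic case; this verification is the heart of the argument in \cite{bassa2019rtransform}. Granting irreducibility of $f_k$, the polynomial $g_k=P_{\sigma^{-1}}(f_k)$ is irreducible by Proposition \ref{theopsigma} ii), closing the induction. I expect the composition law and degree bookkeeping to be routine, and the irreducibility criterion for $f_0(x^{t^k})$—i.e. recycling the arithmetic of Theorem \ref{Rsigma}—to be the only substantive point.
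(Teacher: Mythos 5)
The paper gives no argument of its own here: its ``proof'' is a pointer to the proof of Theorem \ref{Rsigma} in \cite{bassa2019rtransform}. Your reconstruction follows the same lines as that argument---the composition law $P_\sigma\circ P_\tau=P_{\tau\sigma}$ with the degree caveat of Proposition \ref{theopsigma} i), an induction giving $f_k=S_t(f_{k-1})$ and $P_{\sigma^{-1}}(f_k)=g_k$, and the order-based substitution criterion for irreducibility of $f_0(x^{t^k})$---and your account of how the hypothesis on $\eta(g_0;\sigma)$ enters is accurate: $\eta(g_0;\sigma)$ is the product of the roots of $f_0=P_\sigma(g_0)$, i.e.\ the norm of a root, and since every prime $\ell\mid t$ divides $q-1$, ``$\eta$ not an $\ell$-th power in $\mathbb{F}_q$'' is equivalent to ``a root of $f_0$ is not an $\ell$-th power in $\mathbb{F}_{q^n}$,'' which packages the two order conditions of the criterion. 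Note, though, that within this paper a shorter route is available: Theorem \ref{Rsigma} is quoted as a black box, so each $g_k$ is already known to be irreducible of degree $t^kn$, and irreducibility of $f_k=P_\sigma(g_k)$ is then immediate from Proposition \ref{theopsigma} ii); only the two identities require the composition-law induction. You instead re-derive the arithmetic and, like the paper, ultimately delegate its verification to \cite{bassa2019rtransform}; that is legitimate but does more than is needed.

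There is one concrete gap in your degree bookkeeping. You justify the non-vanishing conditions by saying the intermediate polynomials are irreducible of degree $>1$, hence have no roots in $\mathbb{F}_q\cup\{\infty\}$. That covers every step except $k=1$ when $n=\deg g_0=1$---precisely the case used in Theorem \ref{theonormal}, where $f_0=\lambda(x-\sigma^{-1}\cdot A)$ is linear. To conclude $f_1=P_\sigma\bigl(P_{\sigma^{-1}}(S_t(f_0))\bigr)=S_t(f_0)$ you need $S_t(f_0)(\sigma^{-1}\cdot\infty)\neq 0$; when $c\neq 0$ this reads $f_0\bigl((-d/c)^t\bigr)\neq 0$, i.e.\ $(-d/c)^t\neq\sigma^{-1}\cdot A$, and a linear $f_0$ can perfectly well have a root in $\mathbb{F}_q$, so irreducibility alone settles nothing. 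The fix is the arithmetic hypothesis itself: for $g_0=x-A$ one has $\eta(g_0;\sigma)=\sigma^{-1}\cdot A$, which by assumption is not an $\ell$-th power in $\mathbb{F}_q$ for any prime $\ell\mid t$, whereas $(-d/c)^t$ is an $\ell$-th power for every such $\ell$; hence the two cannot be equal. (When $c=0$ there is nothing to check, since $P_{\sigma^{-1}}$ is then an affine substitution and preserves degrees.) With that patch your induction closes and the proposal is sound.
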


 Now, we are ready to prove  Theorem  \ref{theonormal}.
\begin{proof}[Proof of Theorem \ref{theonormal}]
Let $\alpha_k\in \overline{\mathbb{F}}_q$ be any root of $g_k$ and let $s|t^k$ be a divisor. The first step of the proof is to show that $\alpha_k$ is root of a polynomial of the form $P_{\sigma^{-1}}(x^{t^k/s}-A_{k,s})$, with $x^{t^k/s}-A_{k,s}\in \mathbb{F}_{q^s}[x]$  irreducible.\\
Since $g_0=x-A\neq x -a/c$, then
\begin{equation*}
    f_0=P_\sigma(g_0)=(ax+b)-(cx+d)(A)=(a-Ac)(x-\sigma^{-1}\cdot A)=\lambda(x-\sigma^{-1}\cdot A),
\end{equation*}
where $\lambda\in \mathbb{F}_{q}^*$ ($g_0\neq x-a/c$). Therefore by Lemma \ref{fk}, $f_k(x)=\lambda(x^{t^k}-\sigma^{-1}\cdot A)$ is irreducible  and
\begin{equation*}
    g_k=P_{\sigma^{-1}}(f_k)=P_{\sigma^{-1}}(\lambda(x^{t^k}-\sigma^{-1}\cdot A))=\lambda P_{\sigma^{-1}}(x^{t^k}-\sigma^{-1}\cdot A).
\end{equation*}
Also, we note that $g_k(\sigma\cdot \infty)\neq 0$ for all $k\geq 0$. Indeed, if $k=0$, then $g_0(\sigma\cdot \infty)\neq 0$ because $g_0\neq x-a/c$. For $k\geq 1$, $g_k$ is an irreducible polynomial of degree bigger than 1, so it cannot have a root in $\mathbb{F}_q$. In the same way, we have that $f_k(\sigma^{-1}\cdot\infty)\neq 0 $ for all $k\geq 0$. Therefore, applying Proposition \ref{theopsigma} $iii)$ and Lemma \ref{fk}, we have that $\theta_k:=\sigma^{-1}\cdot \alpha_k$ is a root of $f_k$.\\
Moreover, the  polynomial    $x^{t^k/s}-(\theta_k)^{t^{k}/s}$ is irreducible in $\mathbb{F}_{q^s}[x]$ and has  $\theta_k$ as a root. Indeed, the polynomial $x^s-A\in \mathbb{F}[x]$ has $\theta^{t^k/s}$ as a root, but $f_k(x)=\lambda(x^{t^k}-\sigma^{-1}\cdot A)$ irreducible, then by  counting degrees we have that the polynomials  $x^{t^k/s}-(\theta_k)^{t^{k}/s}$ and $x^s-A\in \mathbb{F}[x]$ are  irreducible in $\mathbb{F}_{q^s}[x]$  and $\mathbb{F}_q[x]$ 
respectively. Therefore, using again Proposition \ref{theopsigma}  $iii)$, we have that $\alpha_k$ is root of  the polynomial $P_{\sigma^{-1}}(x^{t^k/s}-(\theta_k)^{t^k/s})$. Taking $A_{k,s}=(\theta_k)^{t^k/s}$ we prove the first step. 

The second step is to show that, under the assumptions of Theorem \ref{theonormal}, the polynomial $P_{\sigma^{-1}}(x^{t^k/s}-(\theta_k)^{t^k/s})$ is normal over $\mathbb{F}_{q^s}[x]$, so we will have that $\alpha_k$ is normal in the extension  $\left.\mathbb{F}_{q^{t^k}}\right/\mathbb{F}_{q^s}$.\\ Assume for contradiction that $P_{\sigma^{-1}}(x^{t^k/s}-(\theta_k)^{t^k/s})$ is not normal over $\mathbb{F}_{q^s}[x]$. Since $dc\neq 0$, applying  Lemma \ref{mainlemma} $iii)$ with $\sigma^{-1}$ and $x^{t^k/s}-(\theta_k)^{t^k/s}$, we have that
\begin{equation}\label{p}
    (-c)^{t^k/s-1}a(\theta_k)^{t^k/s}+bd^{t^k/s-1}=0.
\end{equation}
To prove $i)$, assume first that $b=0$. Then equation \ref{p} becomes
\begin{equation*}
    (-c)^{t^k/s-1}a(\theta_k)^{t^k/s}+bd^{t^k/s-1}= (-c)^{t^k/s-1}a(\theta_k)^{t^k/s}=0,
\end{equation*}
but this is true iff $c=0$, $a=0$ or $\theta_k=0$ (equivalently $\sigma^{-1}\cdot A=0$). Now $c=0$ cannot be true because $dc\neq 0$. The relation $a=0$ cannot be true either since $\det \sigma \neq 0$. If $\sigma^{-1}\cdot A=0$, then the polynomials $f_k(x)=x^{t^k}-\sigma^{-1}(A)=x^{t^k}$ are not irreducible, but this is a contradiction by Lemma \ref{fk}. Then the polynomial  $P_{\sigma^{-1}}(x^{t^k/s}-(\theta_k)^{t^k/s})$ is normal  over $\mathbb{F}_{q^s}[x]$. 

Since $s$ is an arbitrary divisor of $t^k$, we have that  $\alpha_k$ is completely normal in the extension $\left.\mathbb{F}_{q^{t^k}}\right/\mathbb{F}_{q}$ for all $k\geq 0$.\\
If $a=0$, then equation \ref{p}  becomes   $bd^{t^k/s-1}=0$ and by  a similar argument this is a contradiction. So the conclusion is the same as $b=0$. This proves $i)$.\\
Now, assume that $ab\neq 0$. We rewrite  equation \ref{p} as
\begin{equation}\label{p1}
      (\theta_k)^{t^k/s}=-\left(-\frac{d}{c}\right)^{t^k/s-1}\frac{b}{a}=\left(-\frac{d}{c}\right)^{t^k/s}\frac{bc}{ad}.
\end{equation}
\textbf{Claim}: $\alpha_k$ is normal in the extension $\left.\mathbb{F}_{q^{t^k}}\right/\mathbb{F}_{q^s}$ for each $s|t^k$ divisor different from 1. 

This  comes from of the fact that $x^{t^k}-\sigma^{-1}\cdot A$ is the minimal polynomial of $\theta_k$ over $\mathbb{F}_q[x]$, therefore necessarily $(\theta_k)^{t^k/s}\not\in\mathbb{F}_q$ for all $s|t^k$ divisor different of $1$, but this is a contradiction with the  equation \ref{p1} because $\sigma\in \rm{GL}_2(\mathbb{F}_q)$.\\
 For $s=1$,  equation \ref{p1} is equivalent to $\sigma^{-1}\cdot A=\left(-\frac{d}{c}\right)^{t^k}\frac{bc}{ad}$, and using 
\begin{equation*}
    \eta(g_0,\sigma)=\left(-\frac{d}{c}\right)\cdot g_0\left(\frac{b}{d}\right)\cdot g_0\left(\frac{a}{c}\right)^{-1}=\left(-\frac{d}{c}\right)\left(\frac{b}{d}-A\right)\left(\frac{a}{c}-A\right)^{-1}=\sigma^{-1}\cdot A,
\end{equation*}
 we have that  $\eta(g_0,\sigma)=\left(-\frac{d}{c}\right)^{t^k}\frac{bc}{ad}$.  \\
Assume that  $\frac{bc}{ad}$ is a $\ell$-th power for some $\ell$ dividing $t$. Then, $\eta(g,\sigma)$  is a $\ell$-th power, but this is a contradiction since $g$ satisfies the conditions of Theorem \ref{Rsigma}. Therefore $g_k$ is normal over $\mathbb{F}_q[x]$ and by the previous  Claim, this polynomial is completely normal over  $\mathbb{F}_q[x]$. This proves $ii)$.\\
Now, assume that $d=-c$, and $A\neq 0$. Then, $\sigma\cdot (\sigma^{-1}\cdot A)=A\neq 0$, so  $\sigma^{-1}\cdot A \neq  -b/a$ if and only if $a(\sigma^{-1}\cdot A)+b\neq 0$. Therefore
\begin{equation*}
    (-c)^{t^k-1}a(\sigma^{-1}\cdot A)+bd^{t^k-1}=d^{t^k-1}[a(\sigma^{-1}\cdot A)+b]\neq 0
\end{equation*}
for all $k\geq 0$. Then $g_k$ is normal over $\mathbb{F}_q[x]$ for all $k$ and by the previous Claim, this polynomial is completely normal over $\mathbb{F}_q[x]$. Thus proving $iii)$.
\end{proof}

\section{Chapman's Theorem}
In this section, we give a proof of Chapman's Theorem using Theorem \ref{theonormal}.\\ We recall that the $R$-transform corresponds to the case $t=2$ and $\sigma^*=\begin{pmatrix}
1 & 1\\ 
1 & -1
\end{pmatrix}$ in the $R_{\sigma,t}$-transform.

\begin{proof}[Proof of Chapman's Theorem \ref{chaptheo}]
We will show that there exist a linear polynomial $g_0\in \mathbb{F}_q[x]$, such that $g_1=g_0^R$ and it satisfies the conditions of Theorem \ref{Rsigma}. \\
Since $g_1$ is an irreducible self reciprocal polynomial, its roots are given by $\gamma$ and $1/\gamma$, so the element $A=(\gamma+\gamma^{-1})/2=-a/2$ belongs to $\mathbb{F}_q$.\\
\textbf{Claim}: The polynomial $g_0=x-A\in \mathbb{F}_q[x]$ satisfies the required conditions.\\
If $g_0=x-1$, then $a=-2$ but this is a contradiction because  $g_1=x^2+ax+1$ is irreducible.  The polynomial $g_0^R$ is monic and degree 2. Moreover, using that $g_0(A)=0$ and 
\begin{equation*}
    g_0^R=(2x)g_0\left(\dfrac{x+1/x}{2}\right),
\end{equation*}
we have that $g_0^R$ has $\gamma$ and $1/\gamma$ as its roots, this proves $g_0^R=g_1$. Also
\begin{equation*}
    \eta(g_0;\sigma)=\sigma^{-1}\cdot A=\frac{-1-A}{-1+A}=\frac{-2+a}{-2-a}=\left(\frac{1}{2+a}\right)^2\left(-4+a^2\right), 
\end{equation*}
and since $-4+a^2$ is minus the discriminant of the irreducible polynomial $g_1$ and $-1$ is a square in $\mathbb{F}_q$ ($q \equiv 1 \mod 4$), then the element $\eta(g_0;\sigma)$ is a quadratic nonresidue in $\mathbb{F}_q$, so $g_0$ satisfies the conditions of  Theorem  \ref{Rsigma}.
Then we conclude using  Theorem \ref{theonormal} $ii)$
\end{proof}

\section{Generalization of Theorem \ref{theonormal}}
In this section, we show a generalization of Theorem \ref{theonormal}. This result allows us to construct  families of completely normal polynomials $\{g_0\}_{k\geq 0}$ with the starting polynomial $g_0$  not necessarily  linear. We need first the followings results.
\begin{proposition}
 [\cite{cohen1969}, Theorem 1]\label{theoSt}
Let $A\in \mathbb{F}_q^*$ with multiplicative order $e$. Then the polynomial $x^t-A$ is irreducible if and only if the integer $t$ satisfies the following conditions:  
\begin{enumerate}[i)]
    \item $(t,(q-1)/e)=1$
    \item For all prime  factors $\ell$ of $t$, we have that $\ell$ divides $e$
    \item If $4|n$, then $4|q-1$
\end{enumerate}
\end{proposition}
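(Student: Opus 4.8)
The plan is to reduce the statement to the classical Vahlen--Capelli criterion for irreducibility of binomials over an arbitrary field and then to translate its hypotheses into conditions on the order $e$ of $A$ in the cyclic group $\mathbb{F}_q^*$. Recall that Vahlen--Capelli asserts that $x^t-A$ is irreducible over a field $F$ if and only if (a) $A$ is not an $\ell$-th power in $F$ for every prime $\ell\mid t$, and (b) if $4\mid t$ then $A\notin -4\,F^4$. Taking $F=\mathbb{F}_q$, it then suffices to prove two equivalences: that condition (a) is equivalent to the conjunction of i) and ii), and that, under (a), condition (b) is equivalent to iii) (where the hypothesis should read $4\mid t$).

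For condition (a), I would use that $\mathbb{F}_q^*$ is cyclic of order $q-1$, so that the $\ell$-th powers form the subgroup of index $\gcd(\ell,q-1)$, i.e. the set of $x$ with $x^{(q-1)/\gcd(\ell,q-1)}=1$. Fixing a prime $\ell\mid t$, one has that $A$ is an $\ell$-th power precisely when $e\mid (q-1)/\gcd(\ell,q-1)$. Splitting into the cases $\ell\nmid q-1$ (where $\gcd(\ell,q-1)=1$ and every element is an $\ell$-th power) and $\ell\mid q-1$ (where $A$ is an $\ell$-th power iff $e\mid (q-1)/\ell$), a short $\ell$-adic valuation computation shows that ``$A$ is not an $\ell$-th power for all primes $\ell\mid t$'' holds if and only if $v_\ell(e)=v_\ell(q-1)\geq 1$ for every such $\ell$; equivalently $\ell\mid e$ for all $\ell\mid t$ (condition ii)) and $\ell\nmid (q-1)/e$ for all $\ell\mid t$, that is $(t,(q-1)/e)=1$ (condition i)). This is the routine but central bookkeeping step.

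For condition (b), assuming $4\mid t$ and that (a) already holds, I would specialize the requirement $A\notin -4\,\mathbb{F}_q^4$ to the finite field. Since $4\mid t$ forces (via the previous paragraph) $2\mid e$ with $v_2(e)=v_2(q-1)$, the remaining freedom lies only in the $2$-adic valuation of $q-1$, and one checks, tracking the squares of $-1$ together with fourth powers, that $A$ fails to lie in $-4\,\mathbb{F}_q^4$ exactly when $q\equiv 1\pmod 4$, which is condition iii). I expect this last reduction to be the main obstacle: verifying that over $\mathbb{F}_q$ the Vahlen--Capelli exceptional $4$-condition collapses precisely to the clean congruence $q\equiv 1\pmod 4$, using the constraints already imposed by i) and ii).

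Assembling the three equivalences then yields that $x^t-A$ is irreducible over $\mathbb{F}_q$ if and only if i), ii) and iii) all hold. If one prefers to avoid invoking Vahlen--Capelli, the same result can be obtained intrinsically: letting $\theta\in\overline{\mathbb{F}}_q$ be a root of $x^t-A$, compute the multiplicative order of $\theta$, identify $[\mathbb{F}_q(\theta):\mathbb{F}_q]$ with the multiplicative order of $q$ modulo that order, and impose that this degree equal $t$. This reproduces conditions i)--iii) after the same valuation analysis, but I expect the Vahlen--Capelli route to be the shorter one.
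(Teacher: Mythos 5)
The paper never proves this proposition: it is imported verbatim from the literature (Cohen's 1969 theorem on binomials; the same statement is Theorem 3.75 in Lidl--Niederreiter), so your argument is necessarily a different route — it makes the statement self-contained where the paper settles for a citation. Your reduction to Vahlen--Capelli is sound, and the two translation steps you sketch are both correct. For (a): a prime $\ell\mid t$ with $\ell\nmid q-1$ makes every element an $\ell$-th power, and for $\ell\mid q-1$ the element $A$ of order $e$ is an $\ell$-th power iff $e\mid (q-1)/\ell$ iff $v_\ell(e)<v_\ell(q-1)$; hence (a) holds iff $v_\ell(e)=v_\ell(q-1)\geq 1$ for all $\ell\mid t$, which is exactly i) plus ii) (note i) alone only gives $v_\ell(e)=v_\ell(q-1)$, possibly both zero, so ii) is genuinely needed — your phrasing gets this right). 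For (b): under (a) one has $v_2(e)=v_2(q-1)\geq 1$, so $A$ is a nonsquare; if $q\equiv 1\pmod 4$ then $-4\mathbb{F}_q^4$ lies in the squares (since $-1$ and $4$ are squares), so $A\notin -4\mathbb{F}_q^4$, whereas if $q\equiv 3\pmod 4$ then fourth powers coincide with squares and $-4(\mathbb{F}_q^*)^4=-(\mathbb{F}_q^*)^2$ is precisely the set of nonsquares, so $A\in -4\mathbb{F}_q^4$ and irreducibility fails; this confirms your claim that the exceptional condition collapses to iii). You also correctly identified that the ``$4\mid n$'' in iii) is a typo for ``$4\mid t$''. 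The only caveat is that your write-up leaves both equivalences as ``routine'' computations rather than carrying them out, but the claims as stated are true and the computations do go through as indicated.
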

Unlike  the proof of  Theorem \ref{theonormal}, we will use the following Theorem about the irreducibility  of the polynomials  $\{g_k\}_{k\geq 0}$.
\begin{theorem}[\cite{bassa2019rtransform},Theorem 3.3] \label{Rtgen}
Let $g_0\in \mathbb{F}_q[x]$ be an irreducible polynomial of degree $n$ and let $t\geq 2$ be a positive integer. If $q\equiv 3 \mod 4$ and $t$ is even assume moreover that $n$ is even. For $k\geq 1$ define $g_{k}=g_{k-1}^{R_{\sigma,t}}$. Assume that 
\begin{enumerate}[i)]
    \item $g_0(\sigma\cdot \infty)\neq 0$.
    \item $g_1$ is irreducible.
\end{enumerate}
Then, $\{g_k\}_{k\geq 0}$ is a sequence of irreducible polynomials  over $\mathbb{F}_q[x]$ with $\deg g_k=t^kn$.

\end{theorem}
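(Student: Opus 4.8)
The plan is to transport the whole iteration, via conjugation by $P_\sigma$, into the far more transparent setting of the pure substitution $x\mapsto x^{t}$, and then to read off irreducibility from Cohen's criterion (Proposition~\ref{theoSt}) applied over the extension field $\mathbb{F}_{q^n}$. First I would set $f_0=P_\sigma(g_0)$: since $g_0$ is irreducible and $g_0(\sigma\cdot\infty)\neq 0$ by hypothesis~(i), Proposition~\ref{theopsigma} gives that $f_0$ is irreducible of degree $n$. Exactly as in the argument underlying Lemma~\ref{fk}, conjugating the recursion $g_{k}=P_{\sigma^{-1}}\circ S_t\circ P_\sigma(g_{k-1})$ by $P_\sigma$ collapses the iteration into a substitution: writing $f_k=P_\sigma(g_k)$ one obtains $f_k=\lambda_k\,f_0(x^{t^k})$ for nonzero scalars $\lambda_k$, together with $g_k=P_{\sigma^{-1}}(f_k)$, the degree being preserved at each stage because the $g_k$ with $k\geq 1$ are irreducible of degree $>1$ and hence have no root in $\mathbb{F}_q\cup\{\infty\}$. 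Since $P_\sigma$ and $P_{\sigma^{-1}}$ preserve both irreducibility and degree, the theorem becomes equivalent to the single assertion that $f_0(x^{t^k})$ is irreducible over $\mathbb{F}_q$ of degree $t^k n$ for every $k$.

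Next I would invoke the classical root-lifting lemma for finite-field towers: if $F$ is irreducible over $\mathbb{F}_q$ of degree $m$ with root $\theta$, then $F(x^{m'})$ is irreducible over $\mathbb{F}_q$ if and only if $x^{m'}-\theta$ is irreducible over $\mathbb{F}_{q^m}=\mathbb{F}_q(\theta)$. This is immediate from multiplicativity of degrees, since any root $\gamma$ of $F(x^{m'})$ satisfies $[\mathbb{F}_q(\gamma):\mathbb{F}_q]=[\mathbb{F}_{q^m}(\gamma):\mathbb{F}_{q^m}]\cdot m$. Applying it with $F=f_0$, $m=n$, root $\theta_0$ and $m'=t^k$ reduces the problem to showing that $x^{t^k}-\theta_0$ is irreducible over $\mathbb{F}_{q^n}$ for all $k$. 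The same lemma with $m'=t$ shows that hypothesis~(ii)---irreducibility of $g_1$, equivalently of $f_0(x^t)$---is precisely the statement that $x^{t}-\theta_0$ is irreducible over $\mathbb{F}_{q^n}$.

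The core of the argument is then a purely multiplicative statement over the fixed field $K=\mathbb{F}_{q^n}$: if $x^{t}-\theta_0$ is irreducible over $K$, then so is $x^{t^k}-\theta_0$ for every $k$. Let $e=\mathrm{ord}_{K^{*}}(\theta_0)$. By Cohen's Proposition~\ref{theoSt} over $K$, irreducibility of $x^t-\theta_0$ yields (i$'$) $\gcd(t,(|K|-1)/e)=1$, (ii$'$) every prime $\ell\mid t$ divides $e$, and (iii$'$) if $4\mid t$ then $4\mid |K|-1$. I would then check that these force the corresponding conditions with $t^k$ in place of $t$. Conditions (i) and (ii) transfer verbatim, because $t$ and $t^k$ have the same set of prime divisors, so that $\gcd(t^k,M)=1\iff\gcd(t,M)=1$ for any integer $M$, and the prime-divisibility clause is literally unchanged.

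The one genuine obstacle is condition (iii): it can happen that $4\nmid t$ while $4\mid t^k$, namely when $t$ is even but not divisible by $4$ and $k\geq 2$, in which case (iii$'$) is vacuous and supplies no information. Here the parity hypothesis of the theorem is exactly what is required: if $q\equiv 1\pmod 4$ then $|K|-1=q^n-1\equiv 0\pmod 4$ automatically, whereas if $q\equiv 3\pmod 4$ and $t$ is even the hypothesis forces $n$ even, whence $q^n\equiv 3^n\equiv 1\pmod 4$ and again $4\mid |K|-1$. Thus condition (iii) also holds for $t^k$, Proposition~\ref{theoSt} delivers the irreducibility of $x^{t^k}-\theta_0$ over $\mathbb{F}_{q^n}$, and unwinding the two reductions shows that every $g_k$ is irreducible of degree $t^k n$. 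I expect the verification of (iii) in this borderline even case to be the only delicate point; everything else is bookkeeping with degrees and with Cohen's criterion.
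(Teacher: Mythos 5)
The paper never proves this statement: it is quoted from \cite{bassa2019rtransform} (just as Lemma \ref{fk} is deferred to that reference), so your attempt can only be judged on its own merits. Your global strategy is reasonable and most of it is correct: the conjugation of the iteration by $P_\sigma$ into the pure substitution $f_0(x^{t^k})$, the root-lifting lemma reducing irreducibility of $f_0(x^{t^k})$ over $\mathbb{F}_q$ to that of $x^{t^k}-\theta_0$ over $\mathbb{F}_{q^n}$, the observation that Cohen's conditions i) and ii) in Proposition \ref{theoSt} transfer from $t$ to $t^k$ because the prime supports coincide, and the use of the parity hypothesis to secure condition iii) when $t\equiv 2\pmod 4$ and $k\geq 2$ are all sound.

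There is, however, a genuine gap, located in the sentence claiming that hypothesis (ii) is ``irreducibility of $g_1$, \emph{equivalently} of $f_0(x^t)$''. Proposition \ref{theopsigma} only gives one direction: irreducible input yields irreducible output. The direction you actually need goes the other way (from $g_1=P_{\sigma^{-1}}(f_0(x^t))$ irreducible to $f_0(x^t)$ irreducible), and it requires that $f_0(x^t)$ not vanish at $\sigma^{-1}\cdot\infty$; if a linear factor of $f_0(x^t)$ sits at $\sigma^{-1}\cdot\infty=-d/c$, that factor is annihilated by $P_{\sigma^{-1}}$, and $g_1$ can be irreducible of degree $<tn$ while $f_0(x^t)$ is reducible. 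When $n\geq 2$ this cannot happen (a root $\beta\in\mathbb{F}_q$ of $f_0(x^t)$ would make $\beta^t$ a root of the irreducible $f_0$ of degree $\geq 2$), and your argument, reorganized as an honest induction on $k$, is correct; but $n=1$ is allowed, your appeal to ``the $g_k$ with $k\geq 1$ are irreducible of degree $>1$'' is circular there, and the step genuinely fails. Concretely, take $q=5$, $t=3$, $\sigma=\begin{pmatrix}1&0\\1&3\end{pmatrix}$, $g_0=x-3$. Then $g_0(\sigma\cdot\infty)=g_0(1)\neq 0$, $P_\sigma(g_0)=3x+1$ with root $\theta_0=3$, and $S_3(P_\sigma(g_0))=3x^3+1$ vanishes at $\sigma^{-1}\cdot\infty=2$; applying $P_{\sigma^{-1}}$ (here $\sigma^{-1}=\begin{pmatrix}1&0\\3&2\end{pmatrix}$) gives $g_1=4x^2+x+3$, which is irreducible over $\mathbb{F}_5$ since its discriminant $3$ is a quadratic non-residue. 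So hypotheses (i) and (ii) hold, yet $\deg g_1=2\neq tn=3$: the statement as quoted is actually false for $n=1$, and no argument can close this gap unless hypothesis (ii) is strengthened to ``$g_1$ is irreducible \emph{of degree} $tn$'' (under that reading, your proof, with the degree-drop issue addressed explicitly and the induction spelled out, does work).
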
 
\begin{theorem} \label{teogen}
Suppose that the polynomial  $f_0(x)=x^n-A$ is irreducible over $\mathbb{F}_q[x]$. If $t$  is even, assume moreover that $q \equiv 1 \mod 4$. If the polynomial $g_0=  P_{\sigma^{-1}}(f_0)$ satisfies the conditions of Theorem \ref{Rtgen} with $dc\neq 0$, then we have the following results.
\begin{enumerate}[i)]
    \item If $ab=0$, then  $\{g_k\}_{k\geq 0}$ is a sequence of completely normal polynomials over $\mathbb{F}_q[x]$.
    \item If $ab\neq 0$, and the element $\frac{bc}{ad}$ is an $\ell$-th  power for some prime $\ell$ dividing $t n$, then  $\{g_k\}_{k\geq 0}$ is a sequence of completely normal polynomials over $\mathbb{F}_q[x]$.
    \item If $ab\neq 0$, $d=-c$ and $A\neq -b/a$, then $\{g_k\}_{k\geq 0}$ is a sequence of completely normal polynomials over $\mathbb{F}_q[x]$.
\end{enumerate}
\begin{proof}
Let $\alpha_k$ be a root of $g_k$. Similarly as in proof of the Theorem \ref{theonormal}, for each $s$ divisor of $t^k$, $\alpha_k$ is root of the polynomial $P_{\sigma^{-1}}(x^{nt^k/s}-(\theta_k)^{nt^k/s})$, with $\theta_k$ root of $f_k(x)=x^{t^k}-A$ and  $x^{nt^k/s}-(\theta_k)^{nt^k/s}\in \mathbb{F}_{q^s}[x]$ irreducible.\\
Since $dc\neq 0$, applying Lemma \ref{mainlemma} $iii)$, we have that $\alpha_k$ is normal in the extension $\left.\mathbb{F}_{q^{nt^k}}\right/\mathbb{F}_{q^s}$ if and only if
\begin{equation*}
    (-c)^{nt^k/s-1}aA+bd^{nt^k/s-1}\neq 0.
\end{equation*}
The proof of $i)$ and $iii)$ is the same as in Theorem \ref{theonormal}. For $ii)$ we only have to show that $g_k$ is normal over $\mathbb{F}_q[x]$. Since $ab\neq 0$, the condition
\begin{equation*}
    (-c)^{nt^k-1}aA+bd^{nt^k-1}\neq0,
\end{equation*}
is equivalent to
\begin{equation*}
    A\neq \left(-\frac{d}{c}\right)^{nt^k}\frac{bc}{ad}.
\end{equation*}
Let $e$ be the multiplicative order of $A$, since $f_0(x)=x^n-A$ and $f_1(x)=x^{nt}-A$ are irreducible, by Proposition \ref{theoSt} $i)$, we have that $(n,(q-1)/e)=1$ and $(nt,(q-1)/e)=1$, therefore $A$ is not an $\ell$-th power in $\mathbb{F}_q$ for all prime $\ell|nt$. So, if the element  $\frac{bc}{ad}$ is an $\ell$-th power in $\mathbb{F}_q$ with $\ell|nt$, then necessarily  $A\neq (-\frac{d}{c})^{nt^k}\frac{bc}{ad}$, it follows that $g_k $ is normal over $\mathbb{F}_q[x]$. This proves $ii)$.
\end{proof}
\end{theorem}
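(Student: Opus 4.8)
The plan is to transplant the argument of Theorem \ref{theonormal} almost verbatim, changing the degree bookkeeping from $t^k$ to $nt^k$ and inserting Cohen's irreducibility criterion (Proposition \ref{theoSt}) at the two spots where the non-linearity of $g_0$ intervenes. The strategy is unchanged: to show a root $\alpha_k$ of $g_k$ is completely normal in $\mathbb{F}_{q^{nt^k}}/\mathbb{F}_q$, I must prove $\alpha_k$ is normal over every intermediate field $\mathbb{F}_{q^s}$ with $s\mid nt^k=\deg g_k$, and for each such $s$ I will realize $\alpha_k$ as a root of a transformed power polynomial $P_{\sigma^{-1}}\!\left(x^{nt^k/s}-\theta_k^{\,nt^k/s}\right)$, to which Lemma \ref{mainlemma} applies.

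First I would set up the power-polynomial picture. Since $g_0=P_{\sigma^{-1}}(f_0)$ with $f_0=x^n-A$ irreducible and $g_0$ obeys the hypotheses of Theorem \ref{Rtgen}, every $g_k$ is irreducible; applying $P_\sigma$ (which preserves irreducibility, Proposition \ref{theopsigma} ii)) together with the transform identity $f_k:=P_\sigma(g_k)=f_0(x^{t^k})$ of Lemma \ref{fk} gives that $f_k(x)=x^{nt^k}-A$ is irreducible over $\mathbb{F}_q$. Setting $\theta_k=\sigma^{-1}\cdot\alpha_k$, Proposition \ref{theopsigma} iii) makes $\theta_k$ a root of $f_k$, so $\theta_k$ generates $\mathbb{F}_{q^{nt^k}}$.

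The subfield reduction is where the new ingredient enters. Fix a divisor $s\mid nt^k$. I claim the minimal polynomial of $\theta_k$ over $\mathbb{F}_{q^s}$ is $x^{nt^k/s}-\theta_k^{\,nt^k/s}$: it is monic of degree $nt^k/s=[\mathbb{F}_{q^{nt^k}}:\mathbb{F}_{q^s}]$ and has $\theta_k$ as a root, so it suffices that its coefficient $\theta_k^{\,nt^k/s}$ lie in $\mathbb{F}_{q^s}$. That element is a root of $x^s-A\in\mathbb{F}_q[x]$, and I would verify through Proposition \ref{theoSt} that the irreducibility of $x^{nt^k}-A$ descends to $x^s-A$ for every $s\mid nt^k$ (conditions i)--iii) of Cohen pass to divisors), so $x^s-A$ is irreducible and $\theta_k^{\,nt^k/s}$ generates $\mathbb{F}_{q^s}$; in particular it lies there. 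A second use of Proposition \ref{theopsigma} iii) then exhibits $\alpha_k$ as a root of $P_{\sigma^{-1}}\!\left(x^{nt^k/s}-\theta_k^{\,nt^k/s}\right)$, completing the reduction.

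Finally, because $dc\neq0$ the entries of $\sigma^{-1}$ relevant to part iii) of the Main Lemma are nonzero, so $\alpha_k$ is normal over $\mathbb{F}_{q^s}$ if and only if $(-c)^{nt^k/s-1}a\,\theta_k^{\,nt^k/s}+b\,d^{\,nt^k/s-1}\neq0$. For $s\neq1$ the quantity $\theta_k^{\,nt^k/s}$ cannot lie in $\mathbb{F}_q$ (otherwise $\theta_k$ would satisfy a polynomial over $\mathbb{F}_q$ of degree $nt^k/s<nt^k$), which rules out the vanishing outright; parts i) ($ab=0$) and iii) ($d=-c$, $A\neq-b/a$) are then settled exactly as in Theorem \ref{theonormal} by the corresponding simplifications of the displayed expression. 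The one genuinely new computation is the case $s=1$ of part ii), where $\theta_k^{\,nt^k}=A$ and the condition reads $A\neq(-d/c)^{nt^k}\,bc/ad$. Here I would invoke Cohen's criterion once more: irreducibility of $x^n-A$ and of $x^{nt}-A$ forces $(nt,(q-1)/e)=1$ while every prime dividing $nt$ divides $e=\mathrm{ord}(A)$, whence $A$ is a non-$\ell$-th power for every prime $\ell\mid nt$. If $bc/ad$ is an $\ell$-th power, matching $\ell$ against the exponent $nt^k$ shows the right-hand side is an $\ell$-th power while $A$ is not, giving the strict inequality and hence normality of $g_k$ over $\mathbb{F}_q$. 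I expect this $\ell$-versus-$nt^k$ matching to be the crux, since it is precisely where the degree $n$ of the starting polynomial must be reconciled with the prime $\ell\mid tn$; one must be careful that the non-residue property of $A$ covers the exponent relevant to each $k$.
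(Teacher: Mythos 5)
Your proposal follows the paper's own proof route step for step: the same realization of a root $\alpha_k$ of $g_k$ as a root of $P_{\sigma^{-1}}\bigl(x^{nt^k/s}-\theta_k^{nt^k/s}\bigr)$ for each divisor $s$ of $nt^k$, the same application of Lemma \ref{mainlemma} $iii)$ with the matrix $\sigma^{-1}$, the same disposal of the case $s\neq 1$ via minimality of the degree of $\theta_k$ over $\mathbb{F}_q$, and for part $ii)$ the same use of Proposition \ref{theoSt} to conclude that $A$ is not an $\ell$-th power for any prime $\ell\mid nt$. Your subfield reduction (descending the Cohen conditions from $nt^k$ to each divisor $s$) is in fact spelled out more explicitly than in the paper, which simply reuses the degree-counting argument of Theorem \ref{theonormal}; the two are equivalent.

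However, the point you yourself flagged --- whether ``the non-residue property of $A$ covers the exponent relevant to each $k$'' --- is a genuine gap, and it cannot be closed as written. The matching argument needs $\bigl(-\tfrac{d}{c}\bigr)^{nt^k}\tfrac{bc}{ad}$ to be an $\ell$-th power, i.e.\ it needs $\ell\mid nt^k$. For $k\geq 1$ this follows from $\ell\mid nt$, but for $k=0$ the exponent is $n$, and a prime $\ell$ dividing $t$ need not divide $n$; in that case the right-hand side need not be an $\ell$-th power and the inequality $A\neq\bigl(-\tfrac{d}{c}\bigr)^{n}\tfrac{bc}{ad}$ cannot be deduced --- indeed it can fail. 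Take $q=7$, $n=2$, $t=3$, $A=3$, $\sigma=\begin{pmatrix}1 & 2\\ 1 & 5\end{pmatrix}$: then $x^2-3$ and $x^6-3$ are irreducible over $\mathbb{F}_7$, $dc\neq 0$, $ab\neq 0$, and $\tfrac{bc}{ad}=6=3^3$ is a cube with $\ell=3\mid tn=6$, so every hypothesis of part $ii)$ holds; yet
\begin{equation*}
    g_0=P_{\sigma^{-1}}(x^2-3)=4(x^2+1),
\end{equation*}
whose roots have trace $0$, is not normal over $\mathbb{F}_7$ (equivalently, $A=3=\bigl(-\tfrac{d}{c}\bigr)^{2}\tfrac{bc}{ad}$, so the excluded equality actually occurs). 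To be fair, the paper's proof makes exactly the same unjustified jump (``then necessarily $A\neq(-\tfrac{d}{c})^{nt^k}\tfrac{bc}{ad}$'' is asserted uniformly in $k$), so as a reconstruction you have matched it faithfully; but as mathematics the step fails at $k=0$, and part $ii)$ is only salvaged by strengthening the hypothesis to $\ell\mid n$ (which makes the matching work for all $k\geq 0$), by adding a condition forcing $\bigl(-\tfrac{d}{c}\bigr)^{n}\tfrac{bc}{ad}$ to be an $\ell$-th power, or by restricting the conclusion to $\{g_k\}_{k\geq 1}$.
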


We can construct explicitly families of completely normal polynomials using Theorem \ref{teogen} and the following Lemma.
\begin{lemma}
Suppose that the polynomial $x^{nt}-A$ with $A\neq -d/c$ is irreducible over $\mathbb{F}_q[x]$. If $t$  is even assume moreover that $q \equiv 1 \mod 4$. Then the polynomial $g_0=  P_{\sigma^{-1}}(x^n-A)$ satisfies the conditions of Theorem \ref{Rtgen}.
\end{lemma}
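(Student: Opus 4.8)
The plan is to verify, for $g_0=P_{\sigma^{-1}}(x^n-A)$, the three requirements needed to invoke Theorem~\ref{Rtgen}: that $g_0$ is irreducible of degree $n$, that its condition i) $g_0(\sigma\cdot\infty)\neq 0$ holds, and that its condition ii) $g_1=g_0^{R_{\sigma,t}}$ is irreducible. I would first dispose of the parity clause in the preamble of Theorem~\ref{Rtgen} (``if $q\equiv 3\bmod 4$ and $t$ is even, then $n$ is even''): under our hypothesis, assuming $t$ even forces $q\equiv 1\bmod 4$, so the case $q\equiv 3\bmod 4$ with $t$ even never occurs and the clause is vacuously satisfied.

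Next I would settle the degree and irreducibility of $g_0$. The key preliminary is that irreducibility descends from $x^{nt}-A$ to $x^n-A$: letting $e$ be the multiplicative order of $A$ and applying Proposition~\ref{theoSt} to the exponent $nt$, conditions i)--iii) there hold for $nt$, and since $n\mid nt$ they hold a fortiori for $n$, so $x^n-A$ is irreducible. Proposition~\ref{theopsigma} ii) then gives that $g_0=P_{\sigma^{-1}}(x^n-A)$ is irreducible. For the degree, Proposition~\ref{theopsigma} i) shows $\deg g_0=n$ exactly when $(x^n-A)(\sigma^{-1}\cdot\infty)\neq 0$, i.e. when $(-d/c)^n\neq A$; for $n=1$ this is precisely the hypothesis $A\neq -d/c$, while for $n\geq 2$ an equality $(-d/c)^n=A$ would exhibit $-d/c\in\mathbb{F}_q$ as a root of the irreducible polynomial $x^n-A$ of degree $\geq 2$, a contradiction.

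For condition i) I would compute $g_0$ explicitly, up to a nonzero scalar, as $g_0(x)=(dx-b)^n-A(a-cx)^n$; evaluating at $\sigma\cdot\infty=a/c$ annihilates the second summand (since $a-c\cdot a/c=0$) and leaves $\bigl((da-bc)/c\bigr)^n=(\det\sigma/c)^n$, which is nonzero because $\det\sigma\neq 0$ and $c\neq 0$. Hence $g_0(\sigma\cdot\infty)\neq 0$.

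Finally, for condition ii) I would trace the $R_{\sigma,t}$-transform through its three constituent maps. A direct expansion, in which the $1/\det\sigma$ factors of $\sigma^{-1}$ cancel, yields $P_\sigma(g_0)=x^n-A$ up to a nonzero scalar, recovering $f_0$ exactly as expected from the mutually inverse behaviour of $P_\sigma$ and $P_{\sigma^{-1}}$ recorded in Lemma~\ref{fk}. Applying $S_t$ produces $x^{nt}-A$ up to a scalar, and then $g_1=P_{\sigma^{-1}}(x^{nt}-A)$ up to a scalar; since $x^{nt}-A$ is irreducible by hypothesis, Proposition~\ref{theopsigma} ii) makes $g_1$ irreducible, establishing condition ii). I expect the main obstacle to be purely the bookkeeping: keeping the scalar factors straight through $P_{\sigma^{-1}}$, $S_t$, and $P_\sigma$, and cleanly justifying the descent of irreducibility from $x^{nt}-A$ to $x^n-A$. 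Neither step is deep, but both must be carried out precisely so that the degree count and the recovery $P_\sigma(g_0)\propto x^n-A$ are exact.
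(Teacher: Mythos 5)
Your proof is correct and follows essentially the same route as the paper: establish degree $n$ and irreducibility of $g_0$ and condition i) via Proposition \ref{theopsigma} (using $A\neq -d/c$ for the case $n=1$ and irreducibility of $x^n-A$ otherwise), and obtain condition ii) from the mutually inverse behaviour of $P_\sigma$ and $P_{\sigma^{-1}}$, so that $g_1=P_{\sigma^{-1}}(x^{nt}-A)$ up to a nonzero scalar and is irreducible by hypothesis. If anything, you are more careful than the paper, which simply asserts that $f_0=x^n-A$ is irreducible, whereas you justify this descent of irreducibility from $x^{nt}-A$ to $x^n-A$ via Proposition \ref{theoSt}.
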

\begin{proof}
We define  $f_0(x)=x^n-A$. Since $f_0\neq x-a/c$ is irreducible, then $f_0(\sigma^{-1}\cdot \infty)\neq 0$, so  applying Proposition \ref{fk} $i)$ and $ii)$ with $g_0=P_{\sigma^{-1}}(f_0)$, we have that $g_0$ is irreducible and $g_0(\sigma\cdot \infty)\neq 0$. Similarly we have that  $P_{\sigma^{-1}}(f_1)$  irreducible and by a direct calculation, this polynomial correspond to $g_1$. This proves the Lemma.
\end{proof}
For example, using Proposition \ref{theoSt}, the polynomial $x^{6}-3$ is  irreducible in $\mathbb{F}_7[x]$. Then for $t=3$ and $\sigma=\begin{pmatrix}
1 & 2 \\ 
2 & 1
\end{pmatrix}\in \rm{GL}_2(\mathbb{F}_q)$, we have that the element $\frac{bc}{ad}=4$ is an square in $\mathbb{F}_7$ and $\sigma
^{-1}=\begin{pmatrix}
2 & 3 \\ 
3 & 3
\end{pmatrix}$. So, applying Theorem \ref{teogen} $ii)$, with $n=2$ and  $g_0(x)=P_{\sigma^{-1}}(x^2-3)$, we have a family $\{g_k\}_{k\geq 0}$ of completely normal polynomials over $\mathbb{F}_7[x]$ with $\deg g_k=2\cdot 3^k$ and the starting polynomial
\begin{equation*}
    g_0(x)=P_\sigma^{-1}(x^2-3)=(2x+3)^2-3(3x+2)^2=5x^2+4x+4.
\end{equation*}
For $\mathbb{F}_{31}$, $\rm{ord}(5)=3$ and $\rm{ord}(2)=5$. So, the polynomial $x^{15}-10$ is  irreducible in $\mathbb{F}_{31}[x]$. Taking $t=5$ and $\sigma=\begin{pmatrix}
2 & 2 \\ 
2 & -2
\end{pmatrix} \in \rm{GL}_2(\mathbb{F}_{31})$, we have that $A=10\neq -2/2=-1 $ and $\sigma^{-1}=\begin{pmatrix}
8 & 8 \\ 
8 & 23
\end{pmatrix}$. Then, using Theorem \ref{teogen} $iii)$ with $g_0=P_{\sigma^{-1}}(x^3-10)$, we get a family $\{g_k\}_{k\geq0}$ of completely normal polynomials with $\deg g_k=3\cdot 5^k$ and starting polynomial
\begin{equation*}
    g_0(x)=P_{\sigma^{-1}}(x^3-10)=(8x+8)^3-10(8x+23)^3=11x^3+x^2+2x+21
\end{equation*}
\section*{Acknowledgements}
 The author thanks  Ricardo Menares to motivate him to write this paper  and his invaluable comments on making this work more readable.
\bibliographystyle{unsrt}
\bibliography{references}
\end{document}